\def\N{{\mathbb{N}}}
\def\R{{\mathbb{R}}}
\theoremstyle{plain}
\newtheorem{theorem}{Theorem}
\newtheorem{proposition}{Proposition}
\newtheorem{definition}{Definition}
\newtheorem{lemma}{Lemma} 
\newtheorem{corollary}{Corollary}
\theoremstyle{remark}
\newtheorem{remark}{Remark}
\newtheorem{Exemps}{Example}
\title[Minimax theorem]{Some new minimax theorems for generalized convexity}
\author{Mohammed Bachir}
\begin{document}

\date{\today} 
\subjclass{Primary 46E15, 46E10, 26B25, Secondary 26D99}
\address{Laboratoire SAMM 4543, Universit\'e Paris 1 Panth\'eon-Sorbonne, France}

\email{Mohammed.Bachir@univ-paris1.fr}
\begin{abstract}
The aim of this article is to establish new two-functions minimax inequalities extending classical results such as Simons' minimax  theorem. Our results will be proved in a non-compact setting. We also prove, under  general conditions, that the one-function minimax equality is in fact equivalent to the well known Simons inequality. Some applications will be given.
\end{abstract}
\maketitle
{\bf Keywords:} Minimax theorem, Alternative theorem, convexlikeness, infsup-convexity, function spaces. 


\section{Introduction}

Let $X$ and $Y$ be arbitrary nonempty sets and $f: X\times Y\to \R$ be a function.  Recall that, $f$ is said to be $t$-convexlike on $X$ for some $t\in (0,1)$, if and only if for all $x_1, x_2\in X$, there exists $x_3\in X$ such that $f(x_3,y)\leq t f(x_1,y) + (1-t) f(x_2,y)$, for all $y\in Y$ and  $f$ is said to be $t$-concavelike on $Y$, if and only if for all $y_1, y_2\in Y$, there exists $y_3\in Y$ such that $f(x, y_3)\geq t f(x,y_1) + (1-t) f(x,y_2)$, for all $x\in X.$ We say that $f$ is convexlike on $X$, a concept due to Fan, (resp. concavelike on $Y$) if and only if $f$ is $t$-convexlike on $X$ (resp. $f$ is $t$-concavelike on $Y$) for every $t\in (0,1)$. The more general notions of infsup-convexity and supinf-concavity will be given in Definition \ref{def01} and Proposition \ref{infsup}. Several properties of these concepts and their uses can be found in  \cite{Simons1, St, Ps, RG0, RG1, RG2}. We say that $f$ has a property $(P)$ on $X$ (resp. on $Y$) if the function $f(\cdot, y)$ (resp. $f(x, \cdot)$) has the property $(P)$ for every $y\in Y$ (resp. for every $x\in X$). In general, the properties that concern us in this paper will be $(P)=$``boundedness", ``semicontinuity", ``convexity" or ``generalized convexity". Clearly, a convex (resp. concave) function $f$ on a convex set $X$ is convexlike (resp. concavelike) on $X$ but the converse is not true in general.

The Fan minimax theorem in \cite[Theorem 2]{Fk} (see also \cite[Theorem 11]{Simons1}) says that if $Y$ is an arbitrary nonempty set, $X$ is a compact Hausdorff space and $f: X\times Y\to \R$ is a function, lower semicontinuous convexlike on $X$ and concavelike on $Y$, then
$$\inf_{x\in X} \sup_{y\in Y} f(x,y) = \sup_{x\in Y}\inf_{y\in X} f(x,y). \hspace{3mm} (\bullet)$$
The Fan minimax theorem was extended by Simons in \cite[Theorem 5c]{Simons0} (see also \cite[Theorem 26]{Simons1}) to two-functions minimax inequality as follows. Let $Y$  be an arbitrary nonempty sets, $X$ is a compact Hausdorff  space, $t\in (0,1)$ and $f, g: X\times Y\to \R$ be two functions such that:

$(i)$ $f$ is $t$-convexlike and lower semicontinuous on $X$,

$(ii)$ $g$ is $t$-concavelike on $Y$,

$(iii)$ $f\leq g$.

Then, 
$$\inf_{x\in X} \sup_{y\in Y} f(x,y) \leq \sup_{y\in Y}\inf_{x\in X} g(x,y).$$
The aim of this article is to establish new two-functions minimax inequalities extending classical results such as Fan, K$\ddot{\textnormal{o}}$nig and Simons  theorems. Our results will be derived from a new theorem of the alternative (Theorem ~\ref{theorem1}).
\vskip5mm
\paragraph{ \bf A. Minimax theorem with Simons-like inequality.} In a first result (Theorem \ref{km1}), we will replace the compactness of $X$ and the semicontinuity of $f$ on $X$ by the following more general condition (in arbitrary nonempty sets $X$ and $Y$), which we will call the {\it Simons-like inequality}: for every net $(x_\alpha)_{\alpha\in I}\subset X$, 
\[
\inf_{x\in X} \sup_{y\in Y} f(x,y) \leq \sup_{ y\in Y} \underset{\alpha}{\limsup} f(x_\alpha,y). \hspace{3mm} (\bullet \bullet)
\]
This inequality  was inspired by \cite[Lemma, P. 704]{Simons3} (see also \cite[Corollary, P. 96]{DF}) and  justified by the inequality obtained in Lemma \ref{corollary0}. The Simons-like inequality is clearly satisfied when $X$ is Hausdorff compact set and $f$ lower semicontinuous on $X$, which allows us to recover immediately the Simons minimax inequality cited above. We will show (Corollary \ref{km2}) that the Simons-like inegality in $(\bullet \bullet)$ is in fact a necessary and sufficient condition for obtaining the one-function minimax equality in $(\bullet)$, whenever $f$ is assumed to be bounded $t$-convexlike on $X$ and $s$-concavelike on $Y$ for some $t,s\in (0,1)$. Simple examples where the Simons-like inequality holds without compactness are given in Example \ref{Exp}, thus ensuring examples of non-compact minimax theorems in Example \ref{Exp1}.

The classical Simons inequality was first introduced by Simons in \cite{Simons3} under certain conditions and then generalised by Deville and Finet in \cite{DF} and Kivisoo and Oja in \cite{KO} (see also \cite[Theorem 10.5]{COR}). Note here that the authors just cited have considered this inequality with sequences $(x_n)_n$ instead of nets $(x_\alpha)_{\alpha\in I}$ (see Remark ~\ref{remsuite}). By combining our result in Theorem \ref{km1} together with the main result in \cite{KO} about the Simons inequality, we give in Corollary \ref{corder} a new minimax inequality without the compactness or semicontinuity assumptions but assuming only convexity on one variable.

\paragraph{ \bf B. Minimax theorem in the pseudocompact framework.} The second part of the paper deals with the two-functions minimax inequality in the context of completely regular Hausdorff pseudocompact spaces. Recall that a topological space is said to be pseudocompact if its image under any real-valued continuous function is bounded. Every countably compact space is pseudocompact. For various properties on pseudocompact spaces we refer to the article by Stephenson in \cite{Strm}.  

Let $X$ be an arbitrary nonempty set, $Y$ be a completely regular Hausdorff pseudocompact space and $f,g :X\times Y \to \R$ be two functions such that:

 $(i)$ $f$ is bounded on $X\times Y$, infsup-convex on $X$ and the family $\lbrace f(x,\cdot): x\in X\rbrace$ is equicontinuous on $Y$,

 $(ii)$ $g$ is supinf-concave on $Y$ and bounded on $X$,

$(iii)$ $f\leq g$.

 Then, it is shown in theorem \ref{cor-equic} that the two-functions minimax inequality holds: $\inf_{x\in X} \sup_{y\in Y} f(x,y) \leq \sup_{y\in Y}\inf_{x\in X} g(x,y).$

To the best of our knowledge, this situation (in the pseudocompact framework) is also new in the literature. Moreover, the notion of infsup-convexity is weaker than convexity, convexlikeness, $t$-convexlikeness etc. As a consequence, we prove (Corollary \ref{let})  that if $Z$ is a Hausdorff countably compact space, $E$ is a Banach space and $(f_n)_n$ is a uniformly bounded sequence of continuous functions from $Z$ into $E$ which $w^*$-pointwize converges to $0$ on $Z$, then there exists a sequence of linear convex combinations of $(f_n)$ which is uniformly convergent to $0$ on $Z$ (an extension of a known result in  \cite[Proposition, p. 103]{DF}).

\vskip5mm
This paper is organized as follows. In section ~\ref{RS2}, we recall the definitions of infsup-convexity and supinf-concavity and we prove that these concepts generalise those of $t$-convexlikness and $t$-concavelikeness. In Section ~\ref{RS3} we prove a new alternative theorem (Theorem ~\ref{theorem1}). This result will be used  to prove our results on minimax theorems. In Section ~\ref{RS4}, we prove our main results Theorem \ref{km1}, Corollary \ref{km2} and Theorem \ref{cor-equic} and we give some consequences and examples. In Section \ref{RS5}, we give an application to $w^*$-pointwise convergent sequence of vector-valued functions.
\section{infsup-convexity and supinf-concavity} \label{RS2}
 The concept of the infsup-convexity  is a  well-known generalization of convexity, convexlikeness and $t$-convexlikeness ($t\in (0,1)$) used by several authors. It was considered for the ﬁrst time by Stefanescu in \cite[Deﬁnition 2.11]{St} as aﬃne weakly convexlikeness, the present nomenclature is due to Ruiz Gal$\acute{\textnormal{a}}$n and was given in \cite[Definition 2.1]{RG1} (see also \cite{RG0, RG2}). For $n\geq 1$, the $(n-1)$-dimentional simplex is defined by $$\Delta_n:=\lbrace (\lambda_1,...,\lambda_n)\in \R_+^n: \sum_{i=1}^n \lambda_i =1 \rbrace.$$
\begin{definition} \label{def01} Let $X$ and $Y$ be nonempty sets and $f: X\times Y\to \R$ be a function. 

$(i)$ We say that $f$ is infsup-convex on $X$ if and only if,
\begin{eqnarray*}
\inf_{x\in X}\sup_{y\in Y} f(x, y) &=& \inf_{\underset{\underset{(\lambda_1,\lambda_2,...,\lambda_n)\in \Delta_n}{ x_1, ...,x_n\in X}}{n\geq 1}} \sup_{y\in Y}\sum_{i=1}^n \lambda_i f(x_i, y)\\
&=& \inf_{\Psi\in \textnormal{conv}(C_X)} \underset{y\in Y}{\sup} \Psi(y)
\end{eqnarray*}
where $C_X:=\lbrace f(x,\cdot): x\in X\rbrace$.

$(ii)$ We say that $f$ is supinf-concave on $Y$ if and only if,
\begin{eqnarray*} 
\sup_{y\in Y}\inf_{x\in X} f(x, y) &=& \sup_{\underset{\underset{(\lambda_1,\lambda_2,...,\lambda_n)\in \Delta_n}{ y_1, ...,y_n\in Y}}{n\geq 1}} \inf_{x\in X}\sum_{i=1}^n \lambda_i f(x, y_i)\\
&=& \sup_{\Psi\in \textnormal{conv}(C_Y)} \underset{x\in X}{\inf} \Psi(x),
\end{eqnarray*}
where $C_Y:=\lbrace f(\cdot, y): y\in Y\rbrace$
\end{definition}
We easily see from the definitions that:
\begin{eqnarray*}
 f \textnormal{ is convexlike on } X 
&\Longrightarrow & f \textnormal{ is infsup-convex on } X.
\end{eqnarray*}
The converse is not true in general (see for instance \cite[Example 2.15]{St} see also \cite{RG2}). However, the notion of infsup-convexity (resp. supinf-concavity) is also a generalisation of $t$-convexity (resp. $t$-concavity) with an arbitrarily fixed $t\in (0,1)$. This follows from a density argument by using the following known fact: 

\noindent {\bf Fact 1.} (see  \cite{Ps} and \cite[Remark 2.8]{St}) If $f$ is $t$-convexlike on $X$ for some $t\in (0,1)$, then for any integer $n\geq 1$, there exists a dense subset $D_n(t)$ of $\Delta_n$ such that: $\forall x_1,x_2,...,x_n\in X$, $\forall (a_1,a_2,...,a_n)\in D_n(t)$, $\exists x_0\in X$ such that,
\begin{eqnarray}\label{Fa} f(x_0,\cdot)\leq \sum_{i=1}^n a_i f(x_i, \cdot), \textnormal{ on } Y.
\end{eqnarray}

\begin{proposition}  \label{infsup} Let $X$ and $Y$ be nonempty sets and $f: X\times Y\to \R$ be a function.

$(i)$ Suppose that $f$ is $t$-convexlike on $X$ for some $t\in (0,1)$, bounded on $Y$ and such that $\sup_{y\in Y} \inf_{x\in X} f(x,y)>-\infty$. Then, $f$ is infsup-convex on $X$.

$(ii)$ Suppose that $f$ is $s$-concavexlike on $Y$ for some $s\in (0,1)$, bounded on $X$ and such that $\inf_{x\in X}\sup_{y\in Y}  f(x,y) <+\infty$. Then, $f$ is supinf-concave on $Y$.
\end{proposition}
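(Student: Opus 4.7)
The plan is to focus on part (i), since part (ii) can be obtained by applying (i) to the function $(y,x)\mapsto -f(x,y)$, which interchanges the roles of $X$ and $Y$ and converts $s$-concavelikeness on $Y$ into $s$-convexlikeness on (the new) $X$, and supinf-concavity on $Y$ into infsup-convexity on (the new) $X$. For (i), one inequality is essentially definitional: since every $f(x,\cdot)$ lies in $\textnormal{conv}(C_X)$ (taking $n=1$ in the defining formula), I automatically have
$$\inf_{x\in X}\sup_{y\in Y} f(x,y) \;\geq\; \inf_{\Psi\in\textnormal{conv}(C_X)}\sup_{y\in Y}\Psi(y),$$
so the task is to establish the reverse inequality.

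The strategy for the reverse inequality is to show that any finite convex combination $\sum_i\lambda_i f(x_i,\cdot)$ with $\lambda\in\Delta_n$ can be dominated on $Y$, up to an arbitrarily small error, by some $f(x_0,\cdot)$. The tool is Fact 1: for coefficient vectors drawn from the dense subset $D_n(t)\subset\Delta_n$, Fact 1 produces a witness $x_0^k\in X$ satisfying $f(x_0^k,\cdot)\leq \sum_{i=1}^n\lambda_i^k f(x_i,\cdot)$ on $Y$. Hence, for any approximating sequence $(\lambda_1^k,\dots,\lambda_n^k)\in D_n(t)$ with $\lambda_i^k\to\lambda_i$, I obtain
$$\inf_{x\in X}\sup_{y\in Y} f(x,y) \;\leq\; \sup_{y\in Y}\sum_{i=1}^n \lambda_i^k f(x_i,y) \qquad \text{for every } k.$$

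The main (and only nontrivial) step will be to pass $k\to\infty$ in this inequality, and this is precisely where the boundedness of $f$ on $Y$ is used. Setting $M_i:=\sup_{y\in Y}|f(x_i,y)|<\infty$, the estimate
$$\Bigl|\sum_{i=1}^n \lambda_i^k f(x_i,y)-\sum_{i=1}^n \lambda_i f(x_i,y)\Bigr| \;\leq\; \sum_{i=1}^n|\lambda_i^k-\lambda_i|\,M_i$$
is uniform in $y\in Y$, so the suprema on the right converge to $\sup_{y\in Y}\sum_i\lambda_i f(x_i,y)$ and I conclude $\inf_{x\in X}\sup_{y\in Y}f(x,y)\leq \sup_{y\in Y}\sum_i\lambda_i f(x_i,y)$. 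Taking the infimum over all admissible $n$, $x_1,\dots,x_n$ and $(\lambda_1,\dots,\lambda_n)\in\Delta_n$ yields the infsup-convexity identity. The residual hypothesis $\sup_{y\in Y}\inf_{x\in X} f(x,y)>-\infty$ plays only the auxiliary role of ruling out the degenerate case in which both sides of the asserted equality are $-\infty$; the substantive content of the argument is the uniform-in-$y$ approximation afforded by boundedness together with Fact 1.
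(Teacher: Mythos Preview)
Your proof is correct and follows essentially the same route as the paper: use Fact~1 to dominate $f(x_0,\cdot)$ by convex combinations with coefficients in the dense set $D_n(t)$, then use boundedness of each $f(x_i,\cdot)$ to pass from $D_n(t)$ to arbitrary $\lambda\in\Delta_n$ via a uniform-in-$y$ estimate. The only organizational difference is that the paper first picks an $\varepsilon$-optimal convex combination (which requires knowing the right-hand infimum $R$ is finite, hence the explicit appeal to $\sup_{y}\inf_{x} f(x,y)>-\infty$), whereas you fix an arbitrary $(\lambda,x_1,\dots,x_n)$ and approximate it directly; your version therefore does not actually need that hypothesis at any step, and your remark that it ``plays only an auxiliary role'' is, if anything, an understatement.
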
 

\begin{proof} We will only give the proof of $(i)$, the part $(ii)$ can be obtained in a similar way. Using Fact 1 in $(\ref{Fa})$, we get  
\begin{eqnarray} \label{V0}
\inf_{x\in X}\sup_{y\in Y} f(x, y)\leq \inf_{\underset{\underset{(a_1,a_2,...,a_n)\in D_n(t)}{ x_1, ...,x_n\in X}}{n\geq 1}} \sup_{y\in Y}\sum_{i=1}^n a_i f(x_i, y).
\end{eqnarray}

We want to establish the same inequality with $\Delta_n$ instead of $D_n(t)$. For every $\varepsilon>0$, there exists $N\geq 1$, $(\bar{\lambda}_1,...,\bar{\lambda}_N)\in \Delta_N$ and $\bar{x}_1, ...,\bar{x}_N\in X$ such that 

\begin{eqnarray}\label{V1}
\sup_{y\in Y} \sum_{i=1}^N \bar{\lambda} _i f(\bar{x}_i, y) -\varepsilon < R:=\inf_{\underset{\underset{(\lambda_1,\lambda_2,...,\lambda_n)\in \Delta_n}{ x_1, ...,x_n\in X}}{n\geq 1}} \sup_{y\in Y}\sum_{i=1}^n \lambda_i f(x_i, y).
\end{eqnarray}

Notice that $R\geq \sup_{y\in Y} \inf_{x\in X} f(x,y)>-\infty$. By the boundedness of $f$ on $Y$, let us choose  $\beta >0$ small enough such that $$ \beta\sum_{i=1}^N \sup_{y\in Y} |f(\bar{x}_i, y)| \leq 1,$$ 
By the density of the set $D_N(t)$ in $\Delta_N$, there exists $(\bar{a}_1,...,\bar{a}_N)\in D_N(t)$ such that $\max \lbrace |\bar{\lambda}_i-\bar{a}_i|: i=1,...,N\rbrace <\varepsilon \beta$. Then, for every $y\in Y$ we have,
\begin{eqnarray*}
|\sum_{i=1}^N (\bar{a}_i-\bar{\lambda}_i ) f(\bar{x}_i, y)| &\leq&\sum_{i=1}^N |\bar{a}_i-\bar{\lambda}_i| |f(\bar{x}_i, y)|\\
&\leq& \varepsilon \beta\sup_{y\in Y}\sum_{i=1}^N |f(\bar{x}_i, y)|\\
&\leq& \varepsilon.
\end{eqnarray*}
Then, we obtain
\begin{eqnarray*}
\sup_{y\in Y} \sum_{i=1}^N \bar{a}_i f(\bar{x}_i, y) \leq \sup_{y\in Y} \sum_{i=1}^N \bar{\lambda}_i f(\bar{x}_i, y)  + \varepsilon.
\end{eqnarray*}
Thus, 
\begin{eqnarray*}
\inf_{\underset{\underset{(a_1,a_2,...,a_n)\in D_n(t)}{x_1, ...,x_n\in X}}{n\geq 1}} \sup_{y\in Y} \sum_{i=1}^n a_i f(x_i, y) &\leq&  \sup_{y\in Y} \sum_{i=1}^N \bar{a}_i f(\bar{x}_i, y) \\
&\leq& \sup_{y\in Y} \sum_{i=1}^N \bar{\lambda}_i f(\bar{x}_i, y)  + \varepsilon.
\end{eqnarray*}

It follows from $(\ref{V0})$ and $(\ref{V1})$ that, for every $\varepsilon>0$
\begin{eqnarray*}
\inf_{x\in X}\sup_{y\in Y} f(x, y) &\leq&   \inf_{\underset{\underset{(\lambda_1,\lambda_2,...,\lambda_n)\in \Delta_n}{ x_1, ...,x_n\in X}}{n\geq 1}} \sup_{y\in Y}\sum_{i=1}^n \lambda_i f(x_i, y)+ 2\varepsilon.
\end{eqnarray*}
Passing to the limit when $\varepsilon \to 0$, we get 
\begin{eqnarray*}
\inf_{x\in X}\sup_{y\in Y} f(x, y) &\leq&   \inf_{\underset{\underset{(\lambda_1,\lambda_2,...,\lambda_n)\in \Delta_n}{x_1, ...,x_n\in X}}{n\geq 1}} \sup_{y\in Y}\sum_{i=1}^n \lambda_i f(x_i, y).
\end{eqnarray*}
The inverse inequality is always true. Hence, $(i)$ is proved.
\end{proof} 
\section{An alternative theorem} \label{RS3}
 There is a wealth of literature on the subjects of alternatives  theorems. Some work on the alternatives  in convex and non-convex frameworks and their applications  to optimization theory and minimax inequalities can be found in \cite{Cr1, Cr2, RG0, RG1, Je1, JON, Ld, St1}, but the list is not exhaustive. We can also found some recent work using  the so called infsup-convexity in \cite{RG0, RG1, RG2} and the references therein. The earliest version of a nonlinear alternative theorem was given by Fan, Glicksberg, and Hoffman \cite{FGH} as follows : Let $C$ be a convex set of some real vector space and $f_i; C \to \R$, $i=1,..., n$ be convex functions. Then, either the system $x\in C$, $f_i(x) <0, i=1,...,n$ has a solution or there exists $\lambda^*\in \R_+^n$, $\lambda^*\neq 0$ such that $\sum_{i=1}^n \lambda^*_i f_i(x)\geq 0$ for all $x\in C$. 

We give in the following theorem a new theorem of the alternative which deals with an arbitrary family of functions. This result  will be very useful for the rest of our work. Let's start by recalling some well-known general facts. Given a topological space $(Z,\tau)$, the space $(\mathcal{C}_b(Z),\|\cdot\|_{\infty})$ denotes the classical Banach space of all real-valued bounded continuous functions on $Z$. The continuous Dirac map is defined by $\delta : Z \longrightarrow \delta(Z)\subset B_{(C_b(Z))^*}$, $z \mapsto  \delta_z$, where $B_{(C_b(Z))^*}$ is the closed unit ball of the dual space $(C_b(Z))^*$ and $\delta_z : C_b(Z)\longrightarrow \R$ is the linear continuous map defined by $\delta_z(\varphi)=\varphi(z)$ for all $z\in Z$ and all $\varphi\in C_b(Z)$. Notice that $\|\delta_z\|\leq 1$, for all $z\in Z$. We denote by $\mathrm{conv}\left(\delta(Z)\right):=\mathrm{conv}\lbrace \delta_z: z\in Z\rbrace$ and $\overline{\mathrm{conv}}^{w^*}\left(\delta(Z)\right):=\overline{\mathrm{conv}}^{w^*}\lbrace \delta_z: z\in Z\rbrace$ the convex hull and the $w^*$-closed convex hull of $\delta(Z)$ respectively. By $\tau_p$ we denote the topology in $\mathcal{C}_b(Z)$ of pointwize convergence on $Z$. If $Z$ is an arbitrary non-empty set, we equip it with the discrete distance, so that $\mathcal{C}_b(Z)$ coincides with $\ell^{\infty}(Z)$ the space of all real-valued bounded functions. Representation theorems for the dual $(\mathcal{C}_b(Z))^*$, especially when $Z$ is a normal space, can be found in \cite[Theorem 14.10, Chapter 14]{AB}. We denote $\mathcal{C}^+_b(X)$ the closed convex positive cone of $\mathcal{C}_b(X)$ and $(\mathcal{C}^+_b(X))^*$ ils dual. The function $\boldsymbol{1}_X$ denotes the constant function equal to $1$ on $X$.

\begin{proposition} \label{pcone} Let $X$ be a topological space. Then, $$(\mathcal{C}^+_b(X))^*=\cup_{\lambda\geq 0} \lambda \overline{\textnormal{conv}}^{w^*}\lbrace \delta_x: x\in X\rbrace.$$
\end{proposition}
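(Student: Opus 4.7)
The plan is to prove both inclusions separately, with the nontrivial direction coming from Hahn--Banach separation in the $w^*$-topology on $(\mathcal{C}_b(X))^*$.

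For the inclusion ``$\supseteq$'', I observe that each Dirac $\delta_x$ belongs to $(\mathcal{C}^+_b(X))^*$ since $\delta_x(f)=f(x)\geq 0$ for every $f\in \mathcal{C}^+_b(X)$. Positivity is preserved by convex combinations and by multiplication by $\lambda\geq 0$. Moreover, $(\mathcal{C}^+_b(X))^*$ is the intersection of the $w^*$-closed half-spaces $\{\nu: \nu(f)\geq 0\}$ for $f\in \mathcal{C}^+_b(X)$, hence is itself $w^*$-closed. Therefore $\lambda\,\overline{\textnormal{conv}}^{w^*}\{\delta_x: x\in X\} \subseteq (\mathcal{C}^+_b(X))^*$ for every $\lambda\geq 0$.

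For the inclusion ``$\subseteq$'', let $\phi \in (\mathcal{C}^+_b(X))^*$ and set $\lambda:=\phi(\boldsymbol{1}_X)\geq 0$. If $\lambda=0$, the standard estimate $-\|f\|_\infty \boldsymbol{1}_X \leq f \leq \|f\|_\infty \boldsymbol{1}_X$ combined with positivity yields $|\phi(f)|\leq \lambda \|f\|_\infty = 0$, so $\phi=0$ is trivially in the right-hand side. If $\lambda>0$, set $\mu:=\phi/\lambda$, so that $\mu$ is positive and $\mu(\boldsymbol{1}_X)=1$; it suffices to show $\mu \in K:=\overline{\textnormal{conv}}^{w^*}\{\delta_x: x\in X\}$. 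Suppose for contradiction that $\mu \notin K$. Since $K$ is convex and $w^*$-closed, the Hahn--Banach separation theorem applied to $((\mathcal{C}_b(X))^*, w^*)$ produces a $w^*$-continuous linear functional strictly separating $\mu$ from $K$. As $w^*$-continuous linear functionals on $(\mathcal{C}_b(X))^*$ are precisely the evaluations at elements of $\mathcal{C}_b(X)$, there exist $\varphi \in \mathcal{C}_b(X)$ and $\alpha \in \R$ such that
\[
\nu(\varphi)\leq \alpha < \mu(\varphi), \quad \forall \nu \in K.
\]
Taking $\nu=\delta_x$ gives $\varphi(x)\leq \alpha$ for all $x\in X$, hence $\alpha \boldsymbol{1}_X -\varphi \in \mathcal{C}^+_b(X)$. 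By positivity of $\mu$ and $\mu(\boldsymbol{1}_X)=1$, we obtain $\alpha = \alpha\mu(\boldsymbol{1}_X) \geq \mu(\varphi) > \alpha$, a contradiction.

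The main (and really only) subtle step is the identification of $w^*$-continuous linear functionals on $(\mathcal{C}_b(X))^*$ with evaluations at points of $\mathcal{C}_b(X)$; this is a standard property of the weak-$^*$ topology but must be invoked explicitly to guarantee that the separating functional has the form $\nu \mapsto \nu(\varphi)$ for some $\varphi \in \mathcal{C}_b(X)$, which in turn allows the positivity of $\mu$ to close the argument.
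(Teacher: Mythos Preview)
Your proof is correct and follows essentially the same route as the paper's: reduce to showing that every positive functional normalized by $\mu(\boldsymbol{1}_X)=1$ lies in $\overline{\textnormal{conv}}^{w^*}\{\delta_x\}$, then argue by Hahn--Banach separation in the $w^*$-topology and use positivity together with $\mu(\boldsymbol{1}_X)=1$ to derive a contradiction. The only cosmetic difference is that the paper invokes \cite{JON} for the decomposition of the dual cone as $\cup_{\lambda\geq 0}\lambda\{\nu:\nu\text{ positive},\ \nu(\boldsymbol{1}_X)=1\}$, whereas you establish this directly via the case split on $\lambda=\phi(\boldsymbol{1}_X)$ (handling $\lambda=0$ by the sandwich $-\|f\|_\infty\boldsymbol{1}_X\le f\le \|f\|_\infty\boldsymbol{1}_X$); this makes your argument slightly more self-contained but is otherwise the same.
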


\begin{proof} From \cite[Section 2, p. 539]{JON}, since $\boldsymbol{1}_X \in \textnormal{int}(\mathcal{C}^+_b(X))$ (the interior of $\mathcal{C}^+_b(X)$) then 
$$\mathcal{C}^+_b(X) =\cup_{\lambda\geq 0} \lbrace \nu \in (\mathcal{C}^+_b(X))^* : \langle \nu, \boldsymbol{1}_X\rangle=1\rbrace.$$
It suffices to prove that $\lbrace \nu \in (\mathcal{C}^+_b(X))^* : \langle \nu, \boldsymbol{1}_X\rangle=1\rbrace=\overline{\textnormal{conv}}^{w^*}\lbrace \delta_x: x\in X\rbrace.$ Let $\mu \in \lbrace \nu \in (\mathcal{C}^+_b(X))^* : \langle \nu, \boldsymbol{1}_X\rangle=1\rbrace$ and suppose by contradiction that $\mu \not \in \overline{\textnormal{conv}}^{w^*}\lbrace \delta_x: x\in X\rbrace.$ By the Hahn-Banach theorem, there exists $f\in \mathcal{C}_b(X) \setminus \{0\}$ and $r\in \R$ such that 
$$\sup_{Q\in \overline{\textnormal{conv}}^{w^*}\lbrace \delta_x: x\in X\rbrace} \langle Q, f \rangle <r \leq \langle \mu, f \rangle.$$
On one hand, $\sup_{Q\in \overline{\textnormal{conv}}^{w^*}\lbrace \delta_x: x\in X\rbrace} \langle Q, f \rangle\geq \sup_{x\in X} f(x)$ and on the other hand, since $\mu$ is positive then $$\langle \mu, f \rangle\leq \langle \mu, \sup_{x\in X} f(x)\rangle=\left( \sup_{x\in X} f(x) \right ) \langle \mu, \boldsymbol{1}_X\rangle=\sup_{x\in X} f(x) .$$
Hence, a contradiction. Thus, $\lbrace \nu \in (\mathcal{C}^+_b(X))^* : \langle \nu, \boldsymbol{1}_X\rangle=1\rbrace\subset \overline{\textnormal{conv}}^{w^*}\lbrace \delta_x: x\in X\rbrace.$ The reverse inclusion is clear. 
\end{proof}

\begin{theorem} \label{theorem1} Let $X$ be a topological space and $A$ be a nonempty convex subset of $\mathcal{C}_b(X)$. Then,  either $A_1)$ or $A_2)$ is true, where:

$A_1)$ there exists $\Phi_0\in A$ such that $\underset{x\in X}{\sup} \Phi_0(x) < 0$, 

$A_2)$  there exists $\nu \in \overline{\textnormal{conv}}^{w^*}\lbrace \delta_x: x\in X\rbrace$ such that $\langle \nu, \Phi \rangle \geq 0$ for all $\Phi \in A$.

If moreover, we assume that $X$ is completely regular Hausdorff pseudocompact space and $A$ is uniformly bounded and relatively compact for the topology $\tau_p$ of pointwise convergence on $X$, then we obtain  in $A_2)$ that $\langle \nu, \Phi \rangle \geq 0$ for all $\Phi \in \overline{A}^{\tau_p}$.

\end{theorem}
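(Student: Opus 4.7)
The plan is to run a Hahn--Banach separation between $A$ and the open convex cone $N := \{\Phi \in \mathcal{C}_b(X) : \sup_{x\in X} \Phi(x) < 0\}$. First I would note that $A_1)$ is exactly the assertion $A \cap N \neq \emptyset$, so it suffices to treat the case where these convex sets are disjoint. Since $N$ is open, convex and nonempty (it contains $-\boldsymbol{1}_X$), the geometric Hahn--Banach theorem delivers a nonzero $\mu \in (\mathcal{C}_b(X))^*$ and $c \in \R$ with $\langle \mu, \Phi\rangle < c \leq \langle \mu, \Psi\rangle$ for all $\Phi \in N$ and $\Psi \in A$. The scaling $\Phi \mapsto \lambda \Phi$ in $N$ forces $\langle \mu, \cdot\rangle \leq 0$ on $N$ and $c \geq 0$; testing at $-\boldsymbol{1}_X \in N$ yields $\langle \mu, \boldsymbol{1}_X\rangle \geq 0$; and for $\Psi \in \mathcal{C}_b^+(X)$ the function $-\Psi - \varepsilon \boldsymbol{1}_X$ belongs to $N$, so letting $\varepsilon \to 0^+$ forces $\mu \geq 0$. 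The squeeze $-\|\Phi\|_\infty \boldsymbol{1}_X \leq \Phi \leq \|\Phi\|_\infty \boldsymbol{1}_X$ together with $\mu \neq 0$ rules out $\langle \mu, \boldsymbol{1}_X\rangle = 0$, so $\nu := \mu/\langle \mu, \boldsymbol{1}_X\rangle$ is positive with $\langle \nu, \boldsymbol{1}_X\rangle = 1$; Proposition~\ref{pcone} then identifies $\nu$ as an element of $\overline{\textnormal{conv}}^{w^*}\{\delta_x : x \in X\}$, and dividing the separation inequality by $\langle \mu, \boldsymbol{1}_X\rangle > 0$ delivers $A_2)$.

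For the moreover statement, the strategy is to invoke Grothendieck's weak-compactness theorem for pseudocompact spaces: when $X$ is pseudocompact, every uniformly bounded, $\tau_p$-relatively compact subset of $\mathcal{C}_b(X)$ is relatively weakly compact. Applied to $\overline{A}^{\tau_p}$, this makes the latter set weakly compact (it is already weakly closed, because the weak topology $w = \sigma(\mathcal{C}_b(X),(\mathcal{C}_b(X))^*)$ is finer than $\tau_p$, so every $\tau_p$-closed set is $w$-closed). The identity map from $(\overline{A}^{\tau_p}, w)$ onto $(\overline{A}^{\tau_p}, \tau_p)$ is then a continuous bijection from a compact space to a Hausdorff space, hence a homeomorphism, and pointwise and weak convergence coincide on $\overline{A}^{\tau_p}$.

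Given $\Phi \in \overline{A}^{\tau_p}$, I would select a net $(\Phi_\alpha) \subset A$ converging to $\Phi$ pointwise; by the previous step the convergence is also weak, so $\langle \nu, \Phi_\alpha\rangle \to \langle \nu, \Phi\rangle$, and passing to the limit in the inequality $\langle \nu, \Phi_\alpha\rangle \geq 0$ closes the argument. The main obstacle I expect is the pseudocompact part: locating the correct form of Grothendieck's theorem (as opposed to its compact-Hausdorff version) and justifying that pointwise and weak convergence agree on the $\tau_p$-closure of $A$. Once that compactness/topology comparison is in place, everything reduces to the routine separation argument and a single limit passage.
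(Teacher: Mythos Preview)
Your argument is correct and follows essentially the same strategy as the paper: a Hahn--Banach separation, identification of the separating functional via Proposition~\ref{pcone}, and for the ``moreover'' clause the Wheeler/Grothendieck result that on uniformly bounded $\tau_p$-compact subsets of $\mathcal{C}_b(X)$ the pointwise and weak topologies coincide when $X$ is completely regular Hausdorff pseudocompact (this is exactly \cite[Theorem~2.6]{Wrf}, which the paper cites). The only organizational difference is in how the separation is set up: the paper phrases the dichotomy as $-\boldsymbol{1}_X \in \overline{\R^+ A + \mathcal{C}_b^+(X)}$ versus $-\boldsymbol{1}_X \notin \overline{\R^+ A + \mathcal{C}_b^+(X)}$ and in the second case separates the point $-\boldsymbol{1}_X$ from that closed cone, whereas you separate $A$ directly from the open cone $N=\{\Phi:\sup_X\Phi<0\}$. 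Your route is slightly more streamlined (no auxiliary cone $\R^+ A + \mathcal{C}_b^+(X)$ to build), while the paper's route makes the role of $-\boldsymbol{1}_X$ as the ``witness'' more explicit; both land on the same positive normalized functional and invoke Proposition~\ref{pcone} identically.
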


\begin{proof} It is easy to see that for all $\mu \in \overline{\textnormal{conv}}^{w^*}\lbrace \delta_x: x\in X\rbrace$ and all $\Phi \in \mathcal{C}_b(X)$, we have $\langle \mu, \Phi \rangle\leq \sup_{x\in X} \Phi(x)$. So it is clear that $A_1)$ and $A_2)$ cannot be realised at the same time. However, we will show that one of the alternatives $A_1)$ or $A_2)$, is always realised.  Indeed, this follows from  the following alternatives respectively:   either $-\boldsymbol{1}_X \in \overline{\R^+ A+\mathcal{C}^+_b (X)}$ or $-\boldsymbol{1}_X \not\in \overline{\R^+ A+\mathcal{C}^+_b (X)}$, where the closure is taken in the Banach space $(\mathcal{C}_b(X), \|\cdot\|_{\infty})$.
\vskip5mm
{\bf Case 1.} Suppose  that  $-\boldsymbol{1}_X \in \overline{\R^+ A+\mathcal{C}^+_b(X)}$. In this case,  there are some $\lambda \geq 0$, $\Phi_0\in A$ and $h_0\in \mathcal{C}^+_b(X)$ such that 
\begin{eqnarray*}
\|h_0 + \lambda \Phi_0 +\boldsymbol{1}_X\|_{\infty}<\frac{1}{2}
\end{eqnarray*}
 It follows  that, for all $x \in X$,
\begin{eqnarray*}
 h_0(x)+ \lambda \Phi_0(x) + 1 < \frac{1}{2}.
\end{eqnarray*}
Notice from the above inequality and the positivity of $h_0$ that  necessarily we have $\lambda >0$ and so  we have $\Phi_0(x) < -\frac{1}{2 \lambda}$, for all $x\in X$.  We deduce that $\sup_{x\in X} \Phi_0 (x)\leq -\frac{1}{2\lambda }<0$. 
Thus, the alternative $A_1)$ is satisfied.
\vskip5mm
{\bf Case 2.} Suppose that  $-\boldsymbol{1}_X \not\in \overline{\R^+ A+\mathcal{C}^+_b(X)}$. Then, by the Hahn-Banach theorem, (using the fact that $\overline{\R^+ A+\mathcal{C}^+_b(X)}$ is a cone) there exists $\mu \in (\mathcal{C}_b(X))^* \setminus\lbrace 0\rbrace$ such that 
\begin{eqnarray} \label{sup}
 \langle \mu, h \rangle  \geq 0, \hspace{3mm} \forall h\in \R^+ A+\mathcal{C}^+_b (X).
\end{eqnarray}
From Proposition \ref{pcone}, $(\mathcal{C}^+_b(X))^*=\cup_{\lambda\geq 0} \lambda \overline{\textnormal{conv}}^{w^*}\lbrace \delta_x: x\in X\rbrace$. Since, $\mu\neq 0$ there exists $\lambda>0$ such that  $\mu=\lambda \nu$ and $\nu\in \overline{\textnormal{conv}}^{w^*}\lbrace \delta_x: x\in X\rbrace$. We see from $(\ref{sup})$, that for all $\Phi\in A$, $ \langle \nu, \Phi \rangle \geq 0$. Thus, the alternative $A_2)$ is given.

Now, if moreover we assume that $X$ is completely regular Hausdorff pseudocompact space,  then according to \cite[Theorem 2.6]{Wrf} (see also \cite{Pt, Ts}), the  pointwise topology and the weak topology $\sigma(C_b(X), (C_b(X))^*)$ agree on uniformly bounded, pointwise compact sets of continuous functions. Hence, if $A$ is uniformly bounded and relatively compact for the topology $\tau_p$ of pointwise convergence on $X$ then from the alternative $A_2)$, we obtain that for all $\Phi\in \overline{A}^{\tau_p}$, $\langle \nu, \Phi \rangle \geq 0.$
\end{proof}
\begin{Exemps} We can recover the result (mentioned above) of Fan, Glicksberg, and Hoffman in \cite{FGH} as follows: we apply  Theorem \ref{theorem1} with $X:=\lbrace f_i: i=1...n\rbrace$ equiped with the discrete distance and $A:=\textnormal{conv}\lbrace \chi_t: t\in C\rbrace\subset \mathcal{C}(X)\simeq \R^n$, where for each $t\in C$, the function $\chi_t : X\to \R$ is defined by $\chi_t(f)=f(t)$ for all $f\in X$. 
\end{Exemps}

\section{Minimax Theorems} \label{RS4}

This section is divided into two sub-sections. The first deals with minimax theorems in relation to Simons-like inequality. In the second, we will consider a minimax theorem in the framework of pseudocompact spaces. These theorems are consequences of the results in the previous section. 

\subsection{Minimax theorem and Simons-like inequality} 

The aim of this section is to establish Theorem \ref{km1} and Corollary \ref{km2}. We need the following lemma.

\begin{lemma} \label{corollary0} Let $X$ and $Y$ be nonempty sets and $f: X\times Y\to \R$ be a function such that $C_Y:=\lbrace f(\cdot, y): y\in Y\rbrace\subset \ell^{\infty}(X)$. Suppose that $f$ is $t$-convexlike on $X$ for some $t\in (0,1)$. Then, there exists a net $(x_\alpha)_{\alpha\in I}\subset X$ such that $$\sup_{y\in Y} \limsup_\alpha f(x_\alpha, y)\leq \sup_{\Psi\in \textnormal{conv}(C_Y)} \underset{x\in X}{\inf} \Psi(x).$$ 
If moreover, we assume that $f$ is supinf-concave on $Y$, then
$$\sup_{y\in Y} \limsup_\alpha f(x_\alpha, y)\leq \sup_{y\in Y} \underset{x\in X}{\inf} f(x,y).$$
\end{lemma}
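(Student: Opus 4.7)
Set $R := \sup_{\Psi \in \mathrm{conv}(C_Y)} \inf_{x\in X} \Psi(x)$; if $R = +\infty$ the conclusion is vacuous, so assume $R \in \R$. The plan is to reduce the statement to the following finite claim: \emph{for every finite subset $F = \{y_1, \ldots, y_n\} \subset Y$ and every $\varepsilon > 0$ there exists $x_{F,\varepsilon} \in X$ with $f(x_{F,\varepsilon}, y_j) \leq R + \varepsilon$ for each $j$.} Granted this, the directed set $I$ of pairs $(F,\varepsilon)$ ordered by $(F,\varepsilon) \leq (F',\varepsilon')$ if and only if $F \subseteq F'$ and $\varepsilon' \leq \varepsilon$, together with the choice $x_\alpha := x_{F,\varepsilon}$, yields the required net: for any $y \in Y$ and $\eta > 0$, every $\alpha \geq (\{y\},\eta)$ satisfies $f(x_\alpha, y) \leq R + \eta$, and hence $\limsup_\alpha f(x_\alpha, y) \leq R$.

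To prove the claim I would apply Theorem \ref{theorem1} to the finite (hence compact) discrete space $Z := \{y_1, \ldots, y_n\}$ together with the nonempty convex subset
\[
A := \mathrm{conv}\{f(x,\cdot)|_Z - (R+\varepsilon/2)\boldsymbol{1}_Z : x \in X\} \subset \mathcal{C}_b(Z) \simeq \R^n.
\]
Since $Z$ is finite, $\overline{\mathrm{conv}}^{w^*}\{\delta_y : y \in Z\}$ coincides with the honest simplex $\{\sum_{j=1}^n \mu_j \delta_{y_j} : \mu \in \Delta_n\}$. In alternative $A_2)$ one would then obtain $\mu \in \Delta_n$ with $\sum_{j=1}^n \mu_j f(x,y_j) \geq R + \varepsilon/2$ for every $x \in X$, so the function $\Psi := \sum_{j=1}^n \mu_j f(\cdot,y_j) \in \mathrm{conv}(C_Y)$ would satisfy $\inf_{x\in X}\Psi(x) \geq R + \varepsilon/2 > R$, contradicting the definition of $R$. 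Hence alternative $A_1)$ must hold, producing $x_1,\ldots,x_m \in X$ and $\beta \in \Delta_m$ with $\sum_{i=1}^m \beta_i f(x_i, y_j) < R + \varepsilon/2$ for every $j$.

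To pass from this convex combination to a single element of $X$ I invoke Fact 1. Because $f(\cdot,y_j) \in \ell^\infty(X)$ for each $j$ and $i$ ranges over a finite set, the values $|f(x_i, y_j)|$ are uniformly bounded by some $M$; choosing $\beta' \in D_m(t)$ with $\|\beta - \beta'\|_1 < \varepsilon/(2M)$ guarantees $\sum_{i=1}^m \beta'_i f(x_i, y_j) < R + \varepsilon$ for every $j$. Fact 1 then furnishes $x_0 \in X$ with $f(x_0, y) \leq \sum_{i=1}^m \beta'_i f(x_i, y)$ for every $y \in Y$, so in particular $f(x_0, y_j) < R + \varepsilon$ for each $j$, which is the claim. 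The second conclusion of the lemma is then immediate: supinf-concavity of $f$ on $Y$ gives, via Definition \ref{def01}, $R = \sup_{y\in Y}\inf_{x\in X} f(x,y)$, so the first inequality becomes the second.

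The main obstacle I anticipate is orchestrating the two approximations. Applying Theorem \ref{theorem1} on the \emph{finite} space $Z$, rather than directly on $X$, is what makes the argument clean: the element $\nu$ appearing in alternative $A_2)$ is then automatically a genuine probability vector in $\Delta_n$, not just a $w^*$-limit of such, and one recovers a concrete $\Psi \in \mathrm{conv}(C_Y)$ that contradicts the definition of $R$. The boundedness assumption $C_Y \subset \ell^\infty(X)$ then plays its role at the very last step, controlling the error in the $\beta \mapsto \beta'$ approximation needed to invoke Fact 1.
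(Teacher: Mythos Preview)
Your argument is correct, and it follows a genuinely different route from the paper's proof. The paper applies Theorem~\ref{theorem1} with the \emph{discrete} space $X$ as the base and $A=-\textnormal{conv}(C_Y)+r\subset\ell^\infty(X)$ as the convex set: the alternative $A_2)$ then produces an abstract $\mu\in\overline{\textnormal{conv}}^{w^*}\{\delta_x:x\in X\}$ satisfying $\langle\mu,f(\cdot,y)\rangle\le r$ for every $y$, and the net $(x_\alpha)$ is obtained by choosing a net $(\mu_\alpha)\subset\textnormal{conv}(\delta(X))$ that $w^*$-approximates $\mu$ and invoking Fact~1 at each stage to replace $\mu_\alpha$ by a single point $x_\alpha$. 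You instead swap the roles of the two variables and apply the alternative on each \emph{finite} subset $Z\subset Y$: because $\mathcal{C}_b(Z)\simeq\R^n$, the element in $A_2)$ is an honest vector of $\Delta_n$ and is ruled out immediately by the definition of $R$, so $A_1)$ together with Fact~1 yields a single $x_{F,\varepsilon}$, and the net is then assembled by hand over the directed set of pairs $(F,\varepsilon)$.

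Your approach is more elementary in that the use of Theorem~\ref{theorem1} collapses to the finite-dimensional Fan--Glicksberg--Hoffman alternative (or even a direct separation in $\R^n$), and no $w^*$-limits in $(\ell^\infty(X))^*$ are ever needed; the net you build is also explicitly indexed. The paper's approach, by contrast, packages the whole obstruction into a single functional $\mu$ acting on $\ell^\infty(X)$, which is conceptually neat and parallels the later arguments in Section~\ref{RS4}, but requires working inside the dual of a non-separable Banach space. Both routes rely on Fact~1 and on the hypothesis $C_Y\subset\ell^\infty(X)$ at exactly the same point---to pass from a convex combination of $\delta_{x_i}$'s to a genuine point of $X$ with a controlled error.
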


\begin{proof} Set $r:=\sup_{\Psi\in \textnormal{conv}(C_Y)} \underset{x\in X}{\inf} \Psi(x)>-\infty$. The inequality is trivial if $r=+\infty$. Suppose that $r <+\infty$.  Let us equip $X$ with the discrete distance so that $\ell^{\infty}(X)=\mathcal{C}_b(X)$. By applying Theorem \ref{theorem1} with the convex set $A:=-\textnormal{conv}(C_Y)+r$ and knowing that the alternative $A_1)$ cannot be satisfied because of the choice of $r$, we get that there exists $\mu \in \overline{\textnormal{conv}}^{w^*}\lbrace \delta_x: x\in X\rbrace$ such that $\langle \mu, \Psi \rangle \leq r$ for all $\Psi \in C_Y$. There exists a net $(\mu_\alpha)_{\alpha\in I}\subset \textnormal{conv}(\delta(X))$ that  $w^*$-converges to $\mu$. For each $\alpha \in I$ there exists $n_\alpha\geq 1$, $x^\alpha_1,...,x^\alpha_{n_\alpha}\in X$ and $(\lambda^\alpha_1,...,\lambda^\alpha_{n_\alpha})\in \Delta_{n_\alpha}$ such that $\mu_\alpha=\sum_{i=1}^{n_\alpha} \lambda^\alpha_i \delta_{x^\alpha_i}$. Since $f$ is $t$-convexlike on $X$, using Fact 1 in $(\ref{Fa})$ we have: for every $\varepsilon >0$ and $\beta \in (0, \frac{1}{ n_\alpha})$ and for $(a^\alpha_1,...,a^\alpha_{n_\alpha})\in D_{n_\alpha}(t)$ such that $\max\lbrace |\lambda^\alpha_i -a^\alpha_i|: i=1,...,n_\alpha \rbrace <\varepsilon \beta$, there exists $x_\alpha\in X$ satisfying: $\forall y \in Y$, 
\begin{eqnarray*}
  f(x_\alpha ,y) &\leq& \sum_{i=1}^{n_\alpha} a^\alpha_i f(x^\alpha_i, y).
\end{eqnarray*}
Set $\nu_\alpha:=\sum_{i=1}^{n_\alpha} a^\alpha_i \delta_{x^\alpha_i}$. We see that $\|\nu_\alpha -\mu_\alpha\|=\|\sum_{i=1}^{n_\alpha} (a^\alpha_i -\lambda^\alpha_i) \delta_{x^\alpha_i}\|\leq \varepsilon \beta n_\alpha\leq \varepsilon$. It follows that for all $f(\cdot, y) \in C_Y$ and all $\alpha \in I$,
\begin{eqnarray*}
 f(x_\alpha ,y) &\leq& \sum_{i=1}^{n_\alpha} a^\alpha_i f(x^\alpha_i, y) \\
&=& \langle \mu_\alpha, f(\cdot, y)\rangle +\langle \mu_\alpha -\nu_\alpha , f(\cdot,y)\rangle\\
&\leq& \langle \mu_\alpha, f(\cdot, y) \rangle +\varepsilon \|f(\cdot,y)\|_{\infty}.
\end{eqnarray*}
So we have, for every $\varepsilon >0$, 
\begin{eqnarray*}
 \underset{\alpha}{\limsup} f(x_\alpha,y) &\leq&\underset{\alpha}{\limsup} \langle \mu_\alpha, f(\cdot,y)\rangle+\varepsilon\|f(\cdot,y)\|_{\infty} \\
&=&\langle \mu, f(\cdot,y)\rangle+\varepsilon\|f(\cdot,y)\|_{\infty}, \forall y\in Y.
\end{eqnarray*}
Passing to the limit when $\varepsilon \to 0$, we get that
\begin{eqnarray*}
 \underset{\alpha}{\limsup} f(x_\alpha,y) &\leq&\langle \mu, f(\cdot,y)\rangle\leq r:=\sup_{\Psi\in \textnormal{conv}(C_Y)} \underset{x\in X}{\inf} \Psi(x), \forall y\in Y.
\end{eqnarray*}
If moreover we assume that $f$ is supinf-concave on $Y$, then we have $$\sup_{\Psi\in \textnormal{conv}(C_Y)} \underset{x\in X}{\inf} \Psi(x)=\sup_{y\in Y} \underset{x\in X}{\inf} f(x,y).$$ This concludes the proof of the lemma.
\end{proof}
\begin{remark} \label{remsuite} The net  $(x_\alpha)_\alpha$ can be replaced by a sequence $(x_n)_n$ in Lemma \ref{corollary0}, whenever the space $\overline{\textnormal{span}}\lbrace f(\cdot,y): y \in Y\rbrace$ is assumed to be a  separable subspace of $\ell^{\infty}(X)$. Indeed, let us equip $X$ with the discrete distance so that $\mathcal{C}_b(X)$ coincides with  $\ell^{\infty}(X)$. The injective mapping $i: \overline{\textnormal{span}}\lbrace f(\cdot,y): y \in Y\rbrace \to \mathcal{C}_b(X)$ is an isometry and the adjoint $i^* : \mathcal{C}_b(X)^* \to (\overline{\textnormal{span}}\lbrace f(\cdot,y): y \in Y\rbrace)^*$ is surjective (see for instance \cite[Propoition 2]{Ba}) and $w^*$-to-$w^*$ continuous. Since $\overline{\textnormal{conv}}^{w^*}(\delta(X))$ is $w^*$-compact in $\mathcal{C}_b(X)^*$ then $K:=i^*(\overline{\textnormal{conv}}^{w^*}(\delta(X)))$ is $w^*$-compact in $(\overline{\textnormal{span}}\lbrace f(\cdot,y): y \in Y\rbrace)^*$. Since $\overline{\textnormal{span}}\lbrace f(\cdot,y): y \in Y\rbrace$ is separable, we have that $i^*(\overline{\textnormal{conv}}^{w^*}(\delta(X)))$ is metrizable. On the other hand, $\langle \mu, f(\cdot, y)\rangle=\langle i^*(\mu), f(\cdot, y)\rangle$ for all $\mu \in \overline{\textnormal{conv}}^{w^*}(\delta(X))$ and all $y\in Y$. Thus, the same proof as in Lemma \ref{corollary0} applies by changing the net $\mu_\alpha=\sum_{i=1}^{n_\alpha} \lambda^\alpha_i \delta_{x^\alpha_i}$, $\alpha \in I$ with a sequence $\mu_k=\sum_{i=1}^{n_k} \lambda^k_i \delta_{x^k_i}$, $k\in \N$.
\end{remark}

The following theorems are extensions of the known minimax theorems of Fan, K$\ddot{\textnormal{o}}$nig and Simons  in \cite[Theorem 11, Theorem 12 \& Theorem 26]{Simons1}. No topology on $X$ and $Y$ is assumed here.

\begin{theorem} \label{km1} Let $X$ and $Y$ be nonempty sets and $f, g: X\times Y\to \R$ be two functions. Suppose that: 

$(i)$ $f$ is bounded on $X$ and $t$-convexlike on $X$ for some $t\in (0,1)$,

$(ii)$ the Simons-like inequality holds, that is, for every net $(x_\alpha)_\alpha \subset X$,
$$\inf_{x\in X} \sup_{y\in Y} f(x,y)\leq \sup_{y\in Y} \limsup_\alpha f(x_\alpha, y).$$

$(iii)$ $g$ is supinf-concave on $Y$.

$(iv)$ $f\leq g$. 

Then, 
$$\inf_{x\in X} \sup_{y\in Y} f(x,y)\leq  \sup_{y\in Y} \inf_{x\in X} g(x, y).$$
\end{theorem}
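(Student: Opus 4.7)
\textbf{Proof plan for Theorem \ref{km1}.} The strategy is to chain three reductions: Lemma~\ref{corollary0} (which produces a controlling net), the Simons-like inequality (ii) evaluated on that very net, and finally the supinf-concavity of $g$ combined with the pointwise domination $f \leq g$.

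First I would verify that Lemma~\ref{corollary0} applies to $f$. Hypothesis (i) gives that $f$ is $t$-convexlike on $X$, and the boundedness of $f$ on $X$ (interpreted in the sense of the introduction, i.e.\ each slice $f(\cdot,y)$ is bounded on $X$) ensures $C_Y := \{f(\cdot,y) : y \in Y\} \subset \ell^{\infty}(X)$. The lemma then yields a net $(x_\alpha)_{\alpha \in I} \subset X$ such that
\[
\sup_{y \in Y} \limsup_{\alpha} f(x_\alpha, y) \;\leq\; \sup_{\Psi \in \mathrm{conv}(C_Y)} \inf_{x \in X} \Psi(x).
\]
Feeding this very net into the Simons-like inequality (ii), I obtain
\[
\inf_{x \in X} \sup_{y \in Y} f(x,y) \;\leq\; \sup_{y \in Y} \limsup_\alpha f(x_\alpha, y) \;\leq\; \sup_{\Psi \in \mathrm{conv}(C_Y)} \inf_{x \in X} \Psi(x).
\]

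Next, I would use (iv) to pass from $f$ to $g$ inside the right-hand side. Any element of $\mathrm{conv}(C_Y)$ is of the form $\sum_{i=1}^n \lambda_i f(\cdot, y_i)$ with $(\lambda_i) \in \Delta_n$ and $y_i \in Y$. Since $f \leq g$ pointwise, we have $\sum_i \lambda_i f(\cdot, y_i) \leq \sum_i \lambda_i g(\cdot, y_i)$ on $X$, so taking $\inf_{x \in X}$ and then supremum over all finite convex combinations gives
\[
\sup_{\Psi \in \mathrm{conv}(C_Y)} \inf_{x \in X} \Psi(x) \;\leq\; \sup_{n \geq 1,\, y_i \in Y,\, \lambda \in \Delta_n} \inf_{x \in X} \sum_{i=1}^n \lambda_i g(x, y_i).
\]
Finally, hypothesis (iii), namely that $g$ is supinf-concave on $Y$, says exactly that the right-hand side above equals $\sup_{y \in Y} \inf_{x \in X} g(x,y)$. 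Chaining the displayed inequalities delivers the desired conclusion
\[
\inf_{x \in X} \sup_{y \in Y} f(x,y) \;\leq\; \sup_{y \in Y} \inf_{x \in X} g(x,y).
\]

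I do not anticipate a real obstacle here: the heavy lifting (the Hahn-Banach separation disguised as the alternative Theorem~\ref{theorem1} and its extraction of an averaging measure $\mu$) has already been carried out inside Lemma~\ref{corollary0}. The only points requiring care are checking that the boundedness assumption in (i) is exactly what is needed so that $C_Y \subset \ell^{\infty}(X)$ (and in particular $r > -\infty$ in the notation of the lemma, which follows from $Y \neq \emptyset$ plus boundedness of each slice), and making sure to apply the Simons-like inequality to the specific net produced by the lemma rather than an arbitrary one.
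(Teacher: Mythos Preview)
Your proposal is correct and follows essentially the same route as the paper: invoke Lemma~\ref{corollary0} under hypothesis (i) to obtain a net whose $\limsup$ is bounded by $\sup_{\Psi\in\mathrm{conv}(C_Y)}\inf_X\Psi$, apply the Simons-like inequality (ii) to that specific net, then use $f\le g$ to pass to $\mathrm{conv}(D_Y)$ and finish with the supinf-concavity of $g$. The paper's proof is slightly terser but the logical skeleton is identical.
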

\begin{proof}  Under the assumption $(i)$ and using Lemma \ref{corollary0}, there exists a net $(x_\alpha)_\alpha \subset X$ such that
$$ \sup_{y\in Y} \limsup_\alpha f(x_\alpha, y) \leq \sup_{\Psi\in \textnormal{conv}(C_Y)} \underset{x\in X}{\inf} \Psi(x).$$
Using the assumption $(ii)$, we obtain 
$$\inf_{x\in X} \sup_{y\in Y} f(x,y) \leq \sup_{\Psi\in \textnormal{conv}(C_Y)} \underset{x\in X}{\inf} \Psi(x).$$
Since, $f\leq g$, we see that $\sup_{\Psi\in \textnormal{conv}(C_Y)} \underset{x\in X}{\inf} \Psi(x)\leq \sup_{\Phi\in \textnormal{conv}(D_Y)} \underset{x\in X}{\inf} \Phi(x)$, where $D_Y:=\lbrace g(\cdot, y): y\in Y\rbrace$. Thus, using $(iii)$ we get $$\inf_{x\in X} \sup_{y\in Y} f(x,y)\leq  \sup_{y\in Y} \inf_{x\in X} g(x, y).$$ 
\end{proof}

The aim of Example \ref{Exp} and Example \ref{Exp1} below is to motivate the Simons-like inequality and minimax equality respectively, in a setting where neither $X$ nor $Y$ is assumed to be compact. The idea in Example \ref{Exp1} is to show that functions $f$ expressed as an inf-convolution of a rather general function $g$ and a specific function $\phi$ satisfy Simons' inequality and, therefore, also the minimax theorem in a non-compact setting.

\begin{Exemps} \label{Exp} Let $X$ and $Y$ be nonempty sets, $\phi: X\times X\to \R$ and $f: X\times Y\to \R$ be two functions satisfying:

$(a_1)$ For every net $(x_\alpha)_{\alpha \in I} \subset X$, there exists a subnet $(x_{h(x_\alpha)})$ such that $$\inf_{x\in X} \limsup_\alpha \phi(x,x_{h(\alpha)})\leq 0.$$

$(a_2)$ There exists $t\in (0,1)$ such that for every $x_1, x_2\in X$ there exists $x_3\in X$ such that $t\phi(x_3,x_1)+(1-t)\phi(x_3,x_2)\leq 0$.

$(a_3)$ For some $K\geq 0$,
$$f(x_1,y)-f(x_2,y)\leq K \phi(x_1,x_2), \forall x_1, x_2\in X, \forall y\in Y.$$

Then, $f$ is $t$-convexlike and satisfies the Simons-like inequality.
\vskip5mm
Notice that if $\xi:X\to \R$ is a function bounded from below and $\phi(x,z):=\max(\xi(x)-\xi(z),0)$, for all $x, z\in X$, then $\phi$ satisfies $(a_1)$-$(a_2)$. 
\end{Exemps}

\begin{proof} Let $I$ be a direct set and $(x_\alpha)_{\alpha\in I} \subset X$ be a net. From the assumptions there exists a subnet $(x_{h(\alpha)})_{\alpha\in I}$ such that  $\inf_{x\in X} \limsup_{\alpha} \phi(x,x_{h(\alpha)})=0$.  On the other hand, for all $\alpha \in I$, all $x\in X$ and all $y\in Y$,
\begin{eqnarray*}
f(x,y)\leq f(x_{h(\alpha)},y)+K \phi(x,x_{h(\alpha)})
\end{eqnarray*}
Then,  for all $x \in X$ and all $y\in Y$:
\begin{eqnarray*}
f(x,y)\leq \limsup_\alpha f(x_{h(\alpha)},y)+K \limsup_\alpha \phi(x, x_{h(\alpha)})
\end{eqnarray*}
So, we get 
\begin{eqnarray*}
\inf_{x\in X} \sup_{y\in Y} f(x,y) &\leq&  \sup_{y\in Y} \limsup_\alpha f(x_{h(\alpha)},y)+K\inf_{x\in X} \limsup_\alpha \phi(x, x_{h(\alpha)}) \\
&\leq&  \sup_{y\in Y} \limsup_\alpha f(x_{\alpha},y)
\end{eqnarray*}
Hence, the Simons-like inequality holds. Now, using $(a_2)$ and $(a_3)$, for every $x_1, x_2\in X$ there exists $x_3\in X$, such that: $\forall y\in Y$
\begin{eqnarray*} 
f(x_3,y) &\leq& t f(x_1,y)+(1-t)f(x_2,y)+ K[t\phi(x_3,x_1)+(1-t)\phi(x_3,x_2)]\\
&\leq& t f(x_1,y)+(1-t)f(x_2,y).
\end{eqnarray*}
Thus, $f$ is $t$-convexlike.
\end{proof}

In fact, for the one-function minimax theorem, it turns out that Simons-like inequality is a necessary and sufficient condition for obtaining minimax equality as soon as $f$ is bounded and $t$-convexlike on $X$ and $s$-concavelike on $Y$ for some $t, s\in (0,1)$.

\begin{corollary}  \label{km2} Let $X$ and $Y$ be nonempty sets and $f: X\times Y\to \R$ be a function. Suppose that $f$ is bounded on $X$, $t$-convexlike on $X$ for some $t\in (0,1)$ and supinf-concave on $Y$. 

Then, the following assertions are equivalent.

$(a)$ The Simons-like inequality holds, that is, for every net $(x_\alpha)_\alpha \subset X$,
$$\inf_{x\in X} \sup_{y\in Y} f(x,y)\leq \sup_{y\in Y} \limsup_\alpha f(x_\alpha, y).$$

$(b)$ The minimax equality holds:
$$\inf_{x\in X} \sup_{y\in Y} f(x,y)= \sup_{y\in Y} \inf_{x\in X} f(x, y).$$
\end{corollary}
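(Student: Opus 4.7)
\textbf{Proof plan for Corollary \ref{km2}.} The claim is an equivalence, so I will handle the two implications separately; both are short given what is already in hand.

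For $(a)\Rightarrow (b)$, the plan is simply to invoke Theorem \ref{km1} with $g:=f$. All four hypotheses of that theorem are immediate under (a) and the standing assumptions of the corollary: boundedness and $t$-convexlikeness of $f$ on $X$ give $(i)$, the Simons-like inequality of (a) is exactly $(ii)$, supinf-concavity of $f$ on $Y$ gives $(iii)$, and $(iv)$ reduces to $f\leq f$. The conclusion of Theorem \ref{km1} then reads
\[
\inf_{x\in X}\sup_{y\in Y} f(x,y)\leq \sup_{y\in Y}\inf_{x\in X} f(x,y),
\]
and combining with the universally valid reverse inequality $\sup_{y}\inf_{x} f\leq \inf_{x}\sup_{y} f$ yields (b).

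For $(b)\Rightarrow(a)$, fix an arbitrary net $(x_\alpha)_{\alpha\in I}\subset X$. For each $y\in Y$ and each $\alpha$, $\inf_{x\in X}f(x,y)\leq f(x_\alpha,y)$; passing to $\limsup_\alpha$ and then taking $\sup_{y\in Y}$ gives
\[
\sup_{y\in Y}\inf_{x\in X} f(x,y)\leq \sup_{y\in Y}\limsup_\alpha f(x_\alpha,y).
\]
Using the minimax equality in (b) on the left-hand side produces exactly the Simons-like inequality of (a). Since the net was arbitrary, (a) is established.

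\textbf{Main obstacle.} There is essentially none: the content is entirely packaged in Theorem \ref{km1}, which already uses Lemma \ref{corollary0} and the alternative Theorem \ref{theorem1} to handle the non-trivial direction. The only conceptual point worth flagging is that the symmetry $t$-convexlike/supinf-concave used in the corollary matches the two-function hypothesis of Theorem \ref{km1} precisely when one sets $g=f$, so no new alternative or density argument (à la Fact~1 and Proposition \ref{infsup}) needs to be reproved here.
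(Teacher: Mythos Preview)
Your proof is correct and follows essentially the same approach as the paper's: the paper also derives $(a)\Rightarrow(b)$ as a direct consequence of Theorem~\ref{km1} (with $g=f$) and obtains $(b)\Rightarrow(a)$ from the trivial inequality $\inf_{x\in X} f(x,y)\leq \limsup_\alpha f(x_\alpha,y)$ combined with the minimax equality. Your write-up simply makes the verification of the hypotheses of Theorem~\ref{km1} more explicit, but there is no substantive difference.
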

\begin{proof} The implication $(a) \Longrightarrow (b)$ is a consequence of Theorem \ref{km1}. The implication $(b) \Longrightarrow (a)$ is always true and requires no assumptions about $X, Y$ and $f$. Indeed, clearly we have $\inf_{x\in X} f(x,y) \leq \underset{\alpha}{\limsup}f(x_{\alpha},y)$ for every net  $(x_\alpha)_\alpha \subset X$ and every $y\in Y$. Thus, $\inf_{x\in X} \sup_{y\in Y} f(x,y)=\sup_{y\in Y} \inf_{x\in X} f(x,y) \leq \sup_{y\in Y} \underset{\alpha}{\limsup}f(x_{\alpha},y)$. 
\end{proof}

We have the following general and generic example of minimax theorem which works for arbitrary sets $X$ and $Y$.

\begin{Exemps} \label{Exp1} Let $X$ and $Y$ be two nonempty sets, $\xi:X\to \R$ be a function bounded from below, $\phi(x,z)=\max(\xi(x)-\xi(z),0)$, for all $x, z\in X$ and $g: X\times Y\to \R$ be a function which is bounded on $X$ and $s$-concavelike on $Y$ for some $s\in (0,1)$. Let $K$ be any non-negative real number and define the function $f$ by 
$$f(x,y):=\inf_{z\in X}\lbrace g(z,y)+K\phi(x,z) \rbrace, \forall x\in X, \forall y\in Y,$$
Then, $$\inf_{x\in X} \sup_{y\in Y} f(x,y)= \sup_{y\in Y} \inf_{x\in X} f(x, y).$$
\end{Exemps}
\begin{proof} First, we see that $f$ is bounded on $X$. Indeed, since $\phi(x,x)=0$ for all $x\in X$, we have that $f(x,y)\leq g(x,y)$ for all $(x,y)\in X\times Y$. On the other hand, since $\phi\geq 0$, we have that $f(x,y)\geq \inf_{z\in X} g(z,y)$. Hence, $f$ is bounded on $X$, since $g$ is bounded on $X$. We see easily that $\phi(x,z)+\phi(z,\bar{x})\geq \phi(x,t)$ for all $x,\bar{x}, z\in X$, which makes it easy to prove that $f(x_1,y)-f(x_2,y)\leq K \phi(x_1,x_2), \forall x_1, x_2\in X, \forall y\in Y$. Indeed, for every $\varepsilon >0$, there exists $z_\varepsilon \in X$ such that 
$$f(x_2,y) +\varepsilon > g(z_\varepsilon,y)+K\phi(x_2,z_\varepsilon).$$
On the other hand,
$$f(x_1,y)\leq g(z_\varepsilon,y)+K\phi(x_1,z_\varepsilon).$$
Hence,
$$f(x_1,y)-f(x_2,y)\leq K[\phi(x_1,z_\varepsilon)-\phi(x_2,z_\varepsilon)] +\varepsilon\leq K\phi(x_1,x_2)+\varepsilon.$$
Passing to the limit when $\varepsilon \to 0$, we get that
$$f(x_1,y)-f(x_2,y)\leq K\phi(x_1,x_2), \forall x_1, x_2\in X, \forall y\in Y.$$
On the other hand, it is easy to see that $\phi$ satisfies $(a_1)$ and $(a_2)$ (for all $t\in (0,1)$) of Example \ref{Exp}.  Hence, by Example \ref{Exp}, $f$ is $t$-convexlike for every $t\in (0,1)$ and satisfies the Simons-like inequality. Finally, we see that $f$ is $s$-concavelike on $Y$ as the infimum of $s$-concavelike on $Y$ functions. We conclude using Corollary \ref{km2}.
\end{proof}

The Simons-like inequality is not always trivial and requires certain conditions, such as in Example \ref{Exp} or the one (harder) established in \cite{Simons3, DF, KO} (see the proof of Corollary \ref{corder} below). However, this inequality is easily and always satisfied if $X$ is a compact Hausdorff set and $f$ is lower semicontinuous on $X$. Thus, we recover easily in the following corollary the Simons minimax theorem (with a slight improvement, since here $g$ is assumed to be only supinf-concave on $Y$ instead of being $t$-concavelike for some $t\in (0,1)$).

\begin{corollary} $($\textnormal{Simons} \cite[Theorem 26]{Simons1} $\&$ \cite[Theorem 2]{Fk}$)$ \label{app2}  Let $Y$  be an arbitrary nonempty sets, $X$ is a compact Hausdorff  space and $f, g: X\times Y\to \R$ be two functions bounded on $X$. Assume that:

$(i)$ $f$ is $t$-convexlike for some $t\in (0,1)$ and lower semicontinuous on $X$,

$(ii)$ $g$ is supinf-concave on $Y$,

$(iii)$ $f\leq g$.

Then, 
$$\inf_{x\in X} \sup_{y\in Y} f(x,y) \leq \sup_{y\in Y}\inf_{x\in X} g(x,y).$$
\end{corollary}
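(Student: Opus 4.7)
The plan is to reduce the statement to Theorem \ref{km1} by verifying its assumption $(ii)$, namely the Simons-like inequality, under the compactness of $X$ and the lower semicontinuity of $f$ on $X$. Indeed, assumptions $(i)$, $(iii)$, $(iv)$ of Corollary \ref{app2} already match $(i)$, $(iii)$, $(iv)$ of Theorem \ref{km1}, so the whole content of the proof amounts to producing the Simons-like inequality for every net $(x_\alpha)_{\alpha\in I}\subset X$.

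For this step, I would fix an arbitrary net $(x_\alpha)_{\alpha\in I}\subset X$. Since $X$ is compact Hausdorff, a standard argument gives a subnet $(x_{h(\beta)})_{\beta\in B}$ and a point $\bar{x}\in X$ with $x_{h(\beta)}\to \bar{x}$. For each fixed $y\in Y$, the lower semicontinuity of $f(\cdot,y)$ on $X$ yields
\[
f(\bar{x},y)\leq \liminf_\beta f(x_{h(\beta)},y).
\]
Combined with the elementary inequalities $\liminf_\beta f(x_{h(\beta)},y)\leq \limsup_\beta f(x_{h(\beta)},y)\leq \limsup_\alpha f(x_\alpha,y)$ (the last one because the $\limsup$ along a subnet is bounded above by the $\limsup$ along the original net), this gives $f(\bar{x},y)\leq \limsup_\alpha f(x_\alpha,y)$ for every $y\in Y$. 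Taking the supremum over $y\in Y$ and using $\inf_{x\in X}\sup_{y\in Y}f(x,y)\leq \sup_{y\in Y} f(\bar{x},y)$, I obtain
\[
\inf_{x\in X}\sup_{y\in Y} f(x,y)\leq \sup_{y\in Y}\limsup_\alpha f(x_\alpha,y),
\]
which is exactly the Simons-like inequality of Theorem \ref{km1}$(ii)$.

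Once this is established, all four hypotheses of Theorem \ref{km1} are in force ($f$ bounded on $X$ and $t$-convexlike, Simons-like inequality, $g$ supinf-concave on $Y$, and $f\leq g$), and the conclusion $\inf_{x\in X}\sup_{y\in Y}f(x,y)\leq \sup_{y\in Y}\inf_{x\in X}g(x,y)$ follows directly. I do not anticipate any serious obstacle here: the only place where care is needed is the $\liminf/\limsup$ comparison along subnets, which is a standard but easy-to-misstate fact for nets (as opposed to sequences), but it is precisely what marries compactness with lower semicontinuity to furnish the Simons-like inequality.
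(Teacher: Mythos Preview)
Your proposal is correct and follows essentially the same route as the paper's own proof: verify the Simons-like inequality via compactness (extract a convergent subnet) and lower semicontinuity of $f(\cdot,y)$, then invoke Theorem~\ref{km1}. The only slip is a harmless labeling typo --- the hypotheses $(i)$, $(ii)$, $(iii)$ of Corollary~\ref{app2} correspond to $(i)$, $(iii)$, $(iv)$ of Theorem~\ref{km1}, not ``$(i)$, $(iii)$, $(iv)$ of Corollary~\ref{app2}'' --- but the mathematics is exactly as in the paper.
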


\begin{proof}  Let $(x_\alpha)_{\alpha\in I}\subset X$ be a net. Then, there exists a subnet $(x_{h(\alpha)})_{\alpha\in I}$ that converges to some $\bar{x}\in X$ (since $X$ is compact). By the lower semicontinuity of $f$ on $X$, we have for all $y\in Y$, 
$f(\bar{x}, y)\leq \underset{\alpha}{\liminf} f(x_{h(\alpha)},y) \leq \underset{\alpha}{\limsup} f(x_\alpha,y)$. Thus, $\underset{y\in Y}{\sup} f(\bar{x},y)\leq \underset{y\in Y}{\sup} \underset{\alpha}{\limsup} f(x_\alpha,y)$. Finally, we have $$\underset{x\in X}{\inf} \underset{y\in Y}{\sup} f(x,y)\leq \underset{y\in Y}{\sup} \underset{\alpha}{\limsup} f(x_\alpha,y).$$ Hence, all of the assumptions of Theorem ~\ref{km1} are satisfied, which gives the conclusion. 
\end{proof}

\begin{remark} The necessary conditions to obtain a general two-function minimax results are given in \cite[Lemma 3.5]{RG1}. We can compare the fairly general results of \cite[Theorem 3.8 \& Theorem 3.9]{RG1} with the result in the above corollary.
\end{remark}

Thanks to Remak ~\ref{remsuite}, if we assume that $\overline{\textnormal{span}}\lbrace f(\cdot,y): y \in Y\rbrace$ is a separable subspace of $\ell^{\infty}(X)$, then we can replace the nets $(x_\alpha)_\alpha \subset X$ by sequances $(x_n)_n\subset X$ in the results of this section. Using Theorem ~\ref{km1} and  \cite[Theorem 2]{KO} (see also \cite[Theorem 1]{DF}) we obtain the following minimax result. No assumption of compactness or semicontinuity is required. 

\begin{corollary} \label{corder} Let $Y$ be a nonempty set and let $X$ be a subset of a Hausdorﬀ topological vector space which is invariant under inﬁnite convex combinations. Let $H, L : X \times Y \to \R$ be two mappings such that 

$(i)$ $H$ is bounded on $X\times Y$ and the functions $H(\cdot , y) : X\to \R$, 
$y \in Y$, are convex and $H(\lambda x, y)=\lambda H(x,y)$, whenever $\lambda >0$, $\lambda x\in X$ and $y\in Y$. Assume moreover that, for every $x\in X$, there exists $y_x \in Y$ satisfying
$$H(x, y_x) = \sup_{y\in Y} H(x, y).$$

$(ii)$ $\overline{\textnormal{span}}\lbrace H(\cdot,y): y \in Y\rbrace$ is a separable subspace of $\ell^{\infty}(X)$. 

$(iii)$ $L$ is supinf-concave on $Y$,

$(iv)$ $H\leq L$.

Then, 
$$\inf_{x\in X} \sup_{y\in Y} H(x,y) \leq \sup_{y\in Y}\inf_{x\in X} L(x,y).$$
\end{corollary}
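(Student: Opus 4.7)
My plan is to apply Theorem~\ref{km1} with $f=H$ and $g=L$, in the sequence variant permitted by Remark~\ref{remsuite}, and to supply the required Simons-like inequality by invoking \cite[Theorem~2]{KO}.

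First I would dispatch the routine half of the hypotheses of Theorem~\ref{km1}. Boundedness of $H$ on $X$ is part of (i). Since $X$ is invariant under (infinite, hence finite) convex combinations and each slice $H(\cdot,y)$ is convex, for any $x_1,x_2\in X$ and any $t\in(0,1)$ the point $x_3:=tx_1+(1-t)x_2$ lies in $X$ and satisfies
$$H(x_3,y)\leq tH(x_1,y)+(1-t)H(x_2,y),\qquad \forall y\in Y,$$
so $H$ is $t$-convexlike on $X$ for every $t\in(0,1)$. The supinf-concavity of $L$ on $Y$ and the pointwise inequality $H\leq L$ are exactly (iii) and (iv) of Theorem~\ref{km1}.

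The main obstacle is verifying the Simons-like inequality. Because hypothesis (ii) makes $\overline{\textnormal{span}}\lbrace H(\cdot,y):y\in Y\rbrace$ a separable subspace of $\ell^{\infty}(X)$, Remark~\ref{remsuite} allows us to replace nets by sequences throughout Lemma~\ref{corollary0} and therefore in Theorem~\ref{km1}. Consequently, it is enough to establish that for every sequence $(x_n)_n\subset X$,
$$\inf_{x\in X}\sup_{y\in Y}H(x,y)\leq \sup_{y\in Y}\limsup_n H(x_n,y).$$
This is exactly the content of \cite[Theorem~2]{KO}: the hypotheses required there, namely that $X$ be invariant under infinite convex combinations, that $H$ be bounded with each $H(\cdot,y)$ convex and positively homogeneous ($H(\lambda x,y)=\lambda H(x,y)$ whenever $\lambda>0$ and $\lambda x\in X$), and that each $H(x,\cdot)$ attain its supremum at some $y_x\in Y$, are precisely those collected in (i).

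Putting the two pieces together, the sequence version of Theorem~\ref{km1} yields
$$\inf_{x\in X}\sup_{y\in Y}H(x,y)\leq \sup_{y\in Y}\inf_{x\in X}L(x,y),$$
which is the desired conclusion. The substantive work is thus isolated in the single appeal to \cite[Theorem~2]{KO}; the remainder of the argument is essentially bookkeeping, translating the convexity/positive-homogeneity/attainment hypotheses into the convexlikeness framework and using the separability hypothesis to switch nets for sequences.
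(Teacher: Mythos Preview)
Your proposal is correct and follows essentially the same route as the paper: invoke \cite[Theorem~2]{KO} under hypothesis~(i) to obtain the sequential Simons-like inequality, use the separability hypothesis~(ii) together with Remark~\ref{remsuite} to justify the sequence version, and then apply Theorem~\ref{km1}. You simply spell out in more detail why convexity of each $H(\cdot,y)$ on the convex set $X$ yields $t$-convexlikeness, a point the paper leaves implicit.
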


\begin{proof} Under the assuption $(i)$, using  \cite[Theorem 2]{KO}, we get that for every sequence $(x_n)_n \subset X$, $$\underset{x\in X}{\inf}\underset{y\in Y}{\sup} H(x,y)\leq \underset{y\in Y}{\sup } \limsup_n H(x_n,y). 
$$
We conclude using Remak ~\ref{remsuite}  and Theroem ~\ref{km1}.

\end{proof}
We end this section with a minimax equality in Proposition \ref{SI} below, based on a previous work in \cite{Ba1}. Let $X$ be a topological space and $(Y,\|\cdot\|)$ be a Banach space included in $\mathcal{C}_b(X)$ and such that $\|\cdot\|\geq \|\cdot\|_{\infty}$ on $Y$.  For a bounded from below function $h: X\to \R$, we define the conjugate $h^\times: Y\to \R$ by 
$$h^\times (\xi):=\sup_{x\in X}\lbrace \xi(x)-h(x) \rbrace, \forall \xi \in Y.$$
A non-convex duality analogous to Fenchel duality based on this conjunction was introduced in \cite{Ba2} and studied in \cite{Ba2, Ba1}. Let us recall from \cite{Ba1} the following notions. 

 A subset $A$ of $Y$ is said to be a $\Delta_Y$-set, if and only if there exists real numbers $(\lambda_x)_{x\in X}\in \R^X$ such that 
$$A=\cap_{x\in X}\lbrace \phi\in Y: \phi(x)\leq \lambda_x\rbrace .$$
Clearly, a $\Delta_Y$-set is a convex $\tau_p$-closed subset of $(Y,\tau_p)$.  We say that the pair $(X,Y)$ satisfies the property $(H)$ if and only if, for each $x\in X$ and each open neighborhood $U$ of $x$, there exists $\sigma  : X\longrightarrow [0,1]$ such that $\sigma \in Y$, $\sigma (x)=1$ and $\sigma (y)=0$ for all $y \in X\setminus U$. Tanks to the Urysohn's lemma, if $X$ is a normal Haudroff space and $Y=\mathcal{C}_b(X)$ then $(X,Y)$ has the property $(H)$. Other examples can be found in \cite[Examples ~1]{Ba1}.

\begin{proposition}\label{SI} Let $X$ be a normal Hausdorff space, $(Y,\|\cdot\|)$ be a Banach space included in $\mathcal{C}_b(X)$ such that $\|\cdot\|\geq \|\cdot\|_{\infty}$ and $(X,Y)$ has the property $(H)$. Let $A\subset Y$ be a  $\Delta_Y$-set.
 Then, $$\inf_{x\in X}\sup_{\phi\in A} \phi(x)=\sup_{\phi\in A} \inf_{x\in X}\phi(x).$$
\end{proposition}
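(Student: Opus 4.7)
The reverse inequality is standard, so the non-trivial direction is $\inf_{x\in X}\sup_{\phi\in A}\phi(x)\leq \sup_{\phi\in A}\inf_{x\in X}\phi(x)$. Set $r:=\inf_{x\in X}\sup_{\phi\in A}\phi(x)$ and assume $r\in\R$ and $A\neq\emptyset$ (the degenerate cases are immediate from the $\Delta_Y$-set definition). The plan is to fix $\varepsilon>0$ and apply the alternative Theorem~\ref{theorem1} to the nonempty convex set
\[
C:=\{(r-\varepsilon)\mathbf{1}_X-\phi : \phi\in A\}\subset \mathcal{C}_b(X).
\]
If alternative $A_1)$ holds, one obtains $\phi_0\in A$ with $\sup_{x\in X}[(r-\varepsilon)-\phi_0(x)]<0$, so $\inf_{x\in X}\phi_0(x)\geq r-\varepsilon$, and letting $\varepsilon\to 0^+$ yields the desired inequality.

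Everything then reduces to excluding the alternative $A_2)$: the existence of some $\nu\in\overline{\mathrm{conv}}^{w^*}\{\delta_x:x\in X\}$ with $\langle\nu,\phi\rangle\leq r-\varepsilon$ for every $\phi\in A$. Replace the family $(\lambda_x)_{x\in X}$ by $\Lambda(x):=\sup_{\phi\in A}\phi(x)$: this leaves $A$ unchanged, $\Lambda$ is lower semicontinuous on $X$ as a pointwise supremum of continuous functions, and $\Lambda\geq r$ on $X$ by the definition of $r$. In particular $\sup_{\phi\in A}\langle\delta_x,\phi\rangle=\Lambda(x)\geq r$ for every $x\in X$, so $A_2)$ is already ruled out when $\nu$ is a Dirac mass; the whole issue is to propagate this pointwise estimate to arbitrary $\nu$ in $\overline{\mathrm{conv}}^{w^*}\{\delta_x:x\in X\}$.

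The plan for this key step is to prove that for every finite convex combination $\nu_0=\sum_{i=1}^n a_i\delta_{x_i}$ one has $\sup_{\phi\in A}\langle\nu_0,\phi\rangle\geq r$ and then to transfer this bound to the $w^*$-limit $\nu$. For such a $\nu_0$ and any $\eta>0$, the inequality $\Lambda(x_i)\geq r$ furnishes, for each $i$, a witness $\phi_i\in A$ with $\phi_i(x_i)>r-\eta$; the Hausdorff property supplies pairwise disjoint open neighborhoods $U_i$ of the $x_i$; property $(H)$ produces $Y$-valued bumps $\sigma_i$ with $\sigma_i(x_i)=1$ and $\sigma_i\equiv 0$ outside $U_i$; and, for a fixed base $\phi_*\in A$, one would set
\[
\phi:=\sum_{i=1}^n\sigma_i\phi_i+\Bigl(1-\sum_{i=1}^n\sigma_i\Bigr)\phi_*.
\]
Pointwise this is a convex combination of elements of $A$, so $\phi\leq\Lambda$ (hence $\phi\in A$ once one knows $\phi\in Y$), and $\phi(x_i)=\phi_i(x_i)>r-\eta$ because $\sigma_j(x_i)=0$ for $j\neq i$; this gives $\langle\nu_0,\phi\rangle>r-\eta$ and, as $\eta\downarrow 0$, $\sup_{\phi\in A}\langle\nu_0,\phi\rangle\geq r$. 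The transfer from finite convex combinations to the $w^*$-limit $\nu$ is then obtained by a regularization of $\nu$ combined with a uniform bound on the patched functions, following the non-convex duality tools of \cite{Ba1}. I expect the main technical obstacle to be precisely the patching step above: one must keep the patched function inside the Banach subspace $Y$, which is not assumed to be an algebra, and it is exactly here that the Urysohn-type conclusion of property $(H)$, together with the normality of $X$, is indispensable.
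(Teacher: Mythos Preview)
Your proposal has two genuine gaps. The first, which you acknowledge, is that the patched function $\phi=\sum_i\sigma_i\phi_i+\bigl(1-\sum_i\sigma_i\bigr)\phi_*$ need not lie in $Y$: the hypotheses do not make $Y$ an algebra, so the products $\sigma_i\phi_i$ may fall outside $Y$, and you offer no mechanism to repair this. Property $(H)$ only guarantees that the bumps $\sigma_i$ are in $Y$, not that $Y$ is stable under multiplication by them.

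The second gap is the more serious one, and it is not the patching step but the transfer from finite convex combinations $\nu_0$ to a general $\nu\in\overline{\mathrm{conv}}^{w^*}\{\delta_x:x\in X\}$. Even granting $\sup_{\phi\in A}\langle\nu_0,\phi\rangle\geq r$ for every finite $\nu_0$, this does not pass to $w^*$-limits: the map $\mu\mapsto\sup_{\phi\in A}\langle\mu,\phi\rangle$ is only $w^*$-lower semicontinuous (as a supremum of $w^*$-continuous functionals), which gives the inequality in the wrong direction, and a uniform bound on the patched functions does not help because $w^*$-convergence is not uniform on norm-bounded sets of $\mathcal{C}_b(X)$. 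Your phrase ``regularization of $\nu$ \dots\ following the non-convex duality tools of \cite{Ba1}'' is not an argument; and in fact the paper's proof shows that \cite[Lemma~3]{Ba1} already does all the work directly, without any appeal to Theorem~\ref{theorem1}. One simply observes that $A=A_Y(f):=\{\psi\in Y:\psi\leq f\}$ with $f(x):=\sup_{\psi\in A}\psi(x)$ lower semicontinuous and bounded below, and then that lemma gives $f^\times(0)=\inf_{\phi\in A}\phi^\times(0)$, i.e., $\inf_{x\in X}f(x)=\sup_{\phi\in A}\inf_{x\in X}\phi(x)$, which is exactly the minimax equality since $f(x)=\sup_{\phi\in A}\phi(x)$. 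So the detour through the alternative theorem and the patching construction is both incomplete and unnecessary.
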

\begin{proof} First, we prove that there exists a bounded from below lower semicontinuous function $f: X\to \R$ such that $A=A_{Y}(f):=\lbrace \psi \in Y: \psi \leq f\rbrace$. Indeed, since $A$ is a $\Delta_Y$-set, there exists real numbers $\lambda_x \in \R$, for all $x\in X$, such that $A=\cap_{x\in X}\lbrace \varphi \in Y: \varphi(x)\leq \lambda_x \rbrace$. Let us set $f(x):= \sup_{\psi \in A } \psi(x)$, for all $x\in X$. Thus, we have $f(x)\leq \lambda_x< +\infty$ for all $x\in X$. It follows that $f$ is lower semicontinuous as supremum of continuous function and $A_Y(f)\subset A$. Moreover, $f$ is bounded from below, since there exists a bounded continuous function $\varphi \in A\subset \mathcal{C}_b(X)$ such that $-\infty < \inf_X \varphi \leq \varphi \leq f$. On the other hand, if $\varphi\in A$, then for all $x\in X$ we have $\varphi(x) \leq \sup_{\psi \in A } \psi(x):=f(x)$. This shows that $\varphi\in A_Y(f)$ and so that $A\subset A_Y(f)$. Hence $A=A_Y(f)$.  By \cite[Lemma 3]{Ba1} we have $f^\times (0)=\inf_{\phi \in A} \phi^\times(0)$, which is equivalent by the definitions to $\inf_{x\in X} f(x)=\sup_{\phi \in A}\inf_{x\in X} \phi(x)$. Finally, we obtain that $\inf_{x\in X}\sup_{\phi\in A} \phi(x)=\sup_{\phi\in A} \inf_{x\in X}\phi(x).$
\end{proof}

\subsection{Minimax theorem in pseudocompact space}

The aim of this section is to establish Theorem \ref{cor-equic}. We need the following two lemmas.
\begin{lemma} \label{lem1} Let $\Omega$ be a completely regular Hausdorff  pseudocompact space and $\Phi$ be a nonempty  subset of $\mathcal{C}(\Omega)$ such that $\textnormal{conv} (\Phi)$ is uniformly bounded and relatively compact for the pointwize topology (in particular if $\Phi$ is uniformly bounded and equi-continuous). Then, 
$$\inf_{\phi\in \overline{\textnormal{conv}}^{\tau_p} (\Phi)} \sup_{u\in \Omega} \phi(u)=\inf_{\phi\in \textnormal{conv} (\Phi)} \sup_{u\in \Omega} \phi(u),$$
where $\tau_p$ denotes the topology of pointwize convergence.
\end{lemma}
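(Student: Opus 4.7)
The inequality $\inf_{\phi\in \overline{\textnormal{conv}}^{\tau_p}(\Phi)} \sup_{u\in \Omega} \phi(u) \leq \inf_{\phi\in \textnormal{conv}(\Phi)} \sup_{u\in \Omega} \phi(u)$ is immediate from the inclusion $\textnormal{conv}(\Phi) \subset \overline{\textnormal{conv}}^{\tau_p}(\Phi)$, so the real content is the reverse inequality. My plan is to upgrade $\tau_p$-closure to norm closure and then finish by a sup-norm $\varepsilon$-approximation.

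More precisely, I will show that $\overline{\textnormal{conv}}^{\tau_p}(\Phi)$ coincides with the norm closure $\overline{\textnormal{conv}}^{\|\cdot\|_\infty}(\Phi)$ taken inside the Banach space $(\mathcal{C}_b(\Omega), \|\cdot\|_\infty)$. Granted this, for any $\phi \in \overline{\textnormal{conv}}^{\tau_p}(\Phi)$ and any $\varepsilon > 0$ one can pick $\psi \in \textnormal{conv}(\Phi)$ with $\|\phi - \psi\|_\infty < \varepsilon$, whence $\sup_{u \in \Omega} \psi(u) \leq \sup_{u \in \Omega} \phi(u) + \varepsilon$. Taking the infimum over $\psi \in \textnormal{conv}(\Phi)$ and then letting $\varepsilon \to 0$ gives $\inf_{\psi \in \textnormal{conv}(\Phi)} \sup_u \psi(u) \leq \sup_u \phi(u)$, and finally passing to the infimum over $\phi \in \overline{\textnormal{conv}}^{\tau_p}(\Phi)$ yields the conclusion.

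The identification $\overline{\textnormal{conv}}^{\tau_p}(\Phi) = \overline{\textnormal{conv}}^{\|\cdot\|_\infty}(\Phi)$ will be obtained by combining two ingredients. First, since $\textnormal{conv}(\Phi)$ is uniformly bounded and relatively $\tau_p$-compact, its pointwise closure $K:=\overline{\textnormal{conv}}^{\tau_p}(\Phi)$ is uniformly bounded (uniform bounds pass to pointwise limits) and $\tau_p$-compact in $\mathcal{C}(\Omega)$. Second, I invoke the theorem of Pryce--Wright type already used in the proof of Theorem \ref{theorem1}: on a completely regular Hausdorff pseudocompact space, the pointwise topology and the weak topology $\sigma(\mathcal{C}_b(\Omega), \mathcal{C}_b(\Omega)^*)$ agree on uniformly bounded pointwise-compact subsets of $\mathcal{C}(\Omega)$. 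Applied to $K$, this gives $\overline{\textnormal{conv}}^{\tau_p}(\Phi) \subset \overline{\textnormal{conv}}^{w}(\Phi)$. Since $\textnormal{conv}(\Phi)$ is convex, Mazur's theorem then yields $\overline{\textnormal{conv}}^{w}(\Phi) = \overline{\textnormal{conv}}^{\|\cdot\|_\infty}(\Phi)$; the trivial reverse inclusion (norm convergence implies pointwise convergence) closes the loop.

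The single substantive step is thus the identification of the pointwise and norm closures, which rests critically on the pseudocompactness of $\Omega$: without it, $\tau_p$ and $w$ need not coincide even on uniformly bounded pointwise-compact families, and Mazur's theorem would not be applicable through the pointwise topology. Everything else amounts to a routine $\varepsilon$-approximation.
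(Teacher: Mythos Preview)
Your argument is correct and takes a genuinely different route from the paper's. The paper argues by contradiction through its alternative Theorem~\ref{theorem1}: assuming a real number $r$ strictly between the two infima, it applies the alternative to $A=\textnormal{conv}(\Phi)-r$, and shows that alternative $A_1)$ contradicts the right-hand bound while alternative $A_2)$ (extended to $\overline{A}^{\tau_p}$ via the Wheeler result \cite[Theorem 2.6]{Wrf}) contradicts the left-hand bound. You instead invoke the same Wheeler-type fact directly on $K=\overline{\textnormal{conv}}^{\tau_p}(\Phi)$ to identify $\tau_p|_K$ with $w|_K$, then use Mazur's theorem to conclude $\overline{\textnormal{conv}}^{\tau_p}(\Phi)=\overline{\textnormal{conv}}^{\|\cdot\|_\infty}(\Phi)$, after which a trivial $\varepsilon$-approximation finishes. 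Your route bypasses the alternative theorem entirely and actually yields the stronger structural statement that the pointwise and norm closures of $\textnormal{conv}(\Phi)$ coincide; the paper's route has the virtue of exhibiting the lemma as an immediate corollary of its central tool. Both arguments ultimately hinge on the same key ingredient---pseudocompactness forcing $\tau_p$ and the weak topology to agree on uniformly bounded pointwise-compact sets of continuous functions---so the distinction is one of packaging rather than depth.
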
 

\begin{proof}  Suppose by contradiction that there exists $r\in \R$ such that 
\begin{eqnarray} 
\label{contr} \inf_{\phi\in \overline{\textnormal{conv}}^{\tau_p} (\Phi)} \sup_{u\in \Omega} \phi(u) < r \leq \inf_{\phi\in \textnormal{conv} (\Phi)} \sup_{u\in \Omega} \phi(u).
\end{eqnarray}
By assumption $\Omega$ is pseudocompact and the set $\textnormal{conv} (\Phi) -r$ is relatively compact  in $\mathcal{C}(\Omega)$ for the pointwize convergence on $\Omega$. Thus, by applying Theorem ~\ref{theorem1} (the second part of the theorem) with the pseudocompact  $\Omega$ and the set $A:=\textnormal{conv}(\Phi) - r\subset \mathcal{C}(\Omega)$, we get the following alternatives: either $A_1)$ or $A_2)$ is true, where 

$A_1)$ there exists $\bar{\phi}\in\textnormal{conv}(\Phi)$ such   
\begin{eqnarray} \label{Q1}
\sup_{u\in \Omega}  \bar{\phi}(u) < r.
\end{eqnarray}

$A_2)$ there exists $\nu^* \in \overline{\textnormal{conv}}^{w^*}\lbrace \delta_u: u\in \Omega\rbrace \subset \mathcal{C}(\Omega)^* \setminus \lbrace 0\rbrace$ such that $\langle \nu^*, \phi -r \rangle \geq 0 $ for all $\phi \in \overline{\textnormal{conv}}^{\tau_p} (\Phi)$. In this case, let $(\nu^*_\alpha)_\alpha\subset \textnormal{conv} (\lbrace\delta_u: u\in \Omega\rbrace)$ be a net $w^*$-converging to $\nu^*$. We see that $\langle \nu^*_\alpha, \phi \rangle \leq \sup_{u\in \Omega} \phi(u)$ for all $\phi\in \overline{\textnormal{conv}}^{\tau_p} (\Phi)$. Thus, taking the $w^*$-limit we get $r\leq \langle \nu^*, \phi \rangle \leq \sup_{u\in \Omega}  \phi(u)$ for all $\phi\in \overline{\textnormal{conv}}^{\tau_p} (\Phi)$. Hence, we have
\begin{eqnarray} \label{Q2}
\inf_{\phi\in \overline{\textnormal{conv}}^{\tau_p} (\Phi)} \sup_{u\in \Omega} \phi(u) \geq r.
\end{eqnarray}
Both the formulas in $(\ref{Q1})$ and $(\ref{Q2})$ contradict $(\ref{contr})$.
\end{proof}

\begin{lemma} \label{lem2} Let $X$ and $Y$ be arbitrary nonempty sets and  $f: X\times Y\to \R$ be a function. Then, the following assertions hold.

$(i)$ If $f$ is infsup-convex on $X$ and bounded on $Y$, then there exists $\nu \in \overline{\textnormal{conv}}^{w^*}(\delta(Y))$ such that
\begin{eqnarray*} \label{Eq1l2}
\inf_{x\in X}\sup_{y\in y} f(x,y)= \inf_{x\in X} \underset{\mu\in \overline{\textnormal{conv}}^{w^*}(\delta(Y))}{\sup} \langle \mu, f(x,\cdot)\rangle &=& \underset{\mu\in \overline{\textnormal{conv}}^{w^*}(\delta(Y))}{\sup} \inf_{x\in X}\langle \mu, f(x,\cdot)\rangle\\
& =& \inf_{x\in X}\langle \nu, f(x,\cdot)\rangle.
\end{eqnarray*}

$(ii)$ If $f$ is supinf-concave on $Y$ and bounded on $X$, then there exists $\nu \in \overline{\textnormal{conv}}^{w^*}(\delta(X))$ such that
\begin{eqnarray*} 
\sup_{y\in Y}\inf_{x\in X} f(x,y)=\sup_{y\in Y} \underset{\mu\in \overline{\textnormal{conv}}^{w^*}(\delta(X))}{\inf} \langle \mu, f(\cdot,y)\rangle &=& \underset{\mu\in \overline{\textnormal{conv}}^{w^*}(\delta(X))}{\inf} \sup_{y\in Y}\langle \mu, f(\cdot,y)\rangle\\
& =& \sup_{y\in Y}\langle \nu, f(\cdot, y)\rangle.
\end{eqnarray*}

\end{lemma}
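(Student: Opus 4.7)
The plan is to derive the four equalities of $(i)$ from a single application of Theorem \ref{theorem1} combined with the definition of infsup-convexity; part $(ii)$ then follows by the symmetric argument applied to $-f$ with the roles of $X$ and $Y$ exchanged, so I will only describe $(i)$.

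Following the paper's convention I equip $Y$ with the discrete distance, so that $\mathcal{C}_b(Y)=\ell^{\infty}(Y)$, and the boundedness of $f$ on $Y$ places each function $f(x,\cdot)$ in $\mathcal{C}_b(Y)$. Any $\mu\in\overline{\textnormal{conv}}^{w^*}(\delta(Y))$ is a positive functional with $\langle\mu,\boldsymbol{1}_Y\rangle=1$ (both properties being stable under convex combinations and under $w^*$-limits), hence $\langle\mu,f(x,\cdot)\rangle\leq\sup_{y\in Y}f(x,y)$, while the bound is attained at $\mu=\delta_y$. This already yields the outer equality
$$\inf_{x\in X}\sup_{y\in Y} f(x,y)=\inf_{x\in X}\sup_{\mu\in\overline{\textnormal{conv}}^{w^*}(\delta(Y))}\langle\mu,f(x,\cdot)\rangle=:V.$$

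For the minimax equality and the existence of the optimal $\nu$ I may assume $V\in\R$: if $V=-\infty$, then $\sup_y f(x,y)\geq f(x,y_0)$ forces $\inf_x f(x,y_0)=-\infty$, so any $\nu=\delta_{y_0}$ trivially works. I then consider the nonempty convex subset
$$B:=\textnormal{conv}\bigl\{f(x,\cdot)-V\,\boldsymbol{1}_Y:\,x\in X\bigr\}\subset\ell^{\infty}(Y).$$
A generic element $\Phi_0\in B$ has the form $\Psi_0-V\boldsymbol{1}_Y$ with $\Psi_0\in\textnormal{conv}(C_X)$, and the definition of infsup-convexity gives $\sup_{y\in Y}\Psi_0(y)\geq \inf_{\Psi\in\textnormal{conv}(C_X)}\sup_y\Psi(y)=V$, and therefore $\sup_y\Phi_0(y)\geq 0$. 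Alternative $A_1)$ of Theorem \ref{theorem1} is thereby excluded, so alternative $A_2)$ supplies some $\nu\in\overline{\textnormal{conv}}^{w^*}(\delta(Y))$ with $\langle\nu,f(x,\cdot)-V\boldsymbol{1}_Y\rangle\geq 0$ for all $x\in X$. Because $\langle\nu,\boldsymbol{1}_Y\rangle=1$, this reads $\langle\nu,f(x,\cdot)\rangle\geq V$, whence $\inf_x\langle\nu,f(x,\cdot)\rangle\geq V$. The reverse inequality is automatic from $\langle\nu,f(x,\cdot)\rangle\leq\sup_y f(x,y)$, yielding $\inf_x\langle\nu,f(x,\cdot)\rangle=V$; applied to an arbitrary $\mu$ the same upper bound gives $\sup_\mu\inf_x\langle\mu,f(x,\cdot)\rangle\leq V$, and the bound is attained at $\nu$, closing the chain of four equalities.

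No serious obstacle is anticipated. The only delicate point is the orientation of $B$: it must be built so that infsup-convexity is exactly what forbids alternative $A_1)$ from happening. The boundedness hypothesis is used precisely to ensure that $f(x,\cdot)-V\boldsymbol{1}_Y\in\ell^{\infty}(Y)$ and that each pairing $\langle\mu,f(x,\cdot)\rangle$ is well defined.
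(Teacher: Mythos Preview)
Your proposal is correct and follows essentially the same route as the paper's proof: equip $Y$ with the discrete metric, set $r=V=\inf_x\sup_y f(x,y)$, apply Theorem~\ref{theorem1} to the convex set $\textnormal{conv}\{f(x,\cdot)-r:x\in X\}\subset\ell^{\infty}(Y)$, rule out alternative $A_1)$ via infsup-convexity, and read off $\nu$ from $A_2)$. The only cosmetic differences are that you spell out $\langle\nu,\boldsymbol{1}_Y\rangle=1$ explicitly and handle the $V=-\infty$ case by a direct choice of $\delta_{y_0}$, whereas the paper dismisses that case in one line.
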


\begin{proof} $(i)$ We equip $Y$ with the discrete distance so that $\ell^{\infty}(Y)=\mathcal{C}_b(Y)$. From the boundedness of $f$ on $Y$, we see that $f(x,\cdot)\in \mathcal{C}_b(Y)$, for all $x\in X$. Notice that we always have
\begin{eqnarray} \label{coucou} 
\underset{\mu\in \overline{\textnormal{conv}}^{w^*}(\delta(Y))}{\sup} \inf_{x\in X}\langle \mu, f(x,\cdot)\rangle &\leq&  \inf_{x\in X} \underset{\mu\in \overline{\textnormal{conv}}^{w^*}(\delta(Y))}{\sup} \langle \mu, f(x,\cdot)\rangle \nonumber \\
&=& \inf_{x\in X} \underset{\mu\in \delta(Y)}{\sup} \langle \mu, f(x,\cdot)\rangle \nonumber \\
&=& \inf_{x\in X} \underset{y\in Y}{\sup} f(x,y).
\end{eqnarray}
We see that $\inf_{x\in X}\sup_{y\in Y} f(x,y)<+\infty$, by the boundedness of $f$ on $Y$. If $\inf_{x\in X}\sup_{y\in Y} f(x,y)=-\infty$, then the desired formula is trivial. Assume that  $$r:=\inf_{x\in  X}\sup_{y\in Y} f(x,y)\in \R.$$ 
Using Theorem ~\ref{theorem1} with  the convex set $A:=\textnormal{conv}\lbrace f(x,\cdot) -r: x \in X\rbrace \subset \mathcal{C}_b(Y)$, we obtain the following alternatives: either $A_1)$ or $A_2)$ is true, where

$A_1)$ there exists $x_1,...,x_n \in X$ and $\lambda_1,...\lambda_n\geq 0$ such that $\sum_{i=1}^n \lambda_i=1$ and 
$$\sup_{y\in Y} \sum_{i=1}^n \lambda_i (f(x_i,y) -r)<0,$$
equivalently,
$$\sup_{y\in Y} \sum_{i=1}^n \lambda_i f(x_i,y) <r.$$

$A_2)$ there exists $\nu \in \overline{\textnormal{conv}}^{w^*}\lbrace \delta_y: y\in Y\rbrace $, such that $\langle \nu, \Phi\rangle\geq 0$ for all $\Phi\in A$. In particular, we have $\inf_{x\in X}\langle \nu, f(x,\cdot)\rangle\geq r$.

\vskip5mm
The alternative $A_1)$ is not satisfied. Indeed, suppose by contradiction that $A_1)$ is true. Then, by setting $C_X:=\lbrace f(x, \cdot): x\in X\rbrace$ and by using the infsup-convexity of $f$ on $X$, we get, 
\begin{eqnarray*} 
r:=\inf_{x\in X} \sup_{y\in Y} f(x,y) &=& \inf_{\Psi\in \textnormal{conv}(C_X)} \underset{y\in Y}{\sup} \Psi(y) \\
&\leq&  \sup_{y\in Y} \sum_{i=1}^n \lambda_i  f(x_i,y) \\
& <& r \textnormal{ (by the alternative } A_1).
\end{eqnarray*}
A contradiction.  Hence, the alternative $A_2)$ is true and so the part $(i)$ is obtained by combining the alternative $A_2)$ and the inequality in $(\ref{coucou})$.

$(ii)$ This part is similar to $(i)$ it suffices to apply $(i)$ with the function $\widetilde{f}$ defined from $Y\times X$ into $\R$ by $\widetilde{f}(y,x)= -f(x,y)$, for all $(y,x)\in Y\times X$.

\end{proof}

Recall that one form of the Arzel\`a-Ascoli theorem is the following:  Let $Z$ be a completely regular Hausdorff space and $\Phi \subset \mathcal{C}_b(Z)$ be a uniformly bounded and equicontinuous set of continuous functions, then $\Phi$  is relatively compact in $\mathcal{C}(Z)$ for the topology of  pointwize convergence on $Z$ (see \cite{Wrf}). Notice also that if $\Phi$ is a uniformly bounded and  equicontinuous set then $\textnormal{conv} (\Phi)$ is also a uniformly bounded and equicontinuous set.

In the following result, we will not assume the $t$-convexlikness but only the infsup-convexity. 

\begin{theorem} \label{cor-equic} Let $X$ be an arbitrary  nonempty set and $Y$ be a completely regular Hausdorff pseudocompact space. Let $ H,L: X\times Y \to \R$ be two mappings, with $H$ bounded on $X\times Y$ and $L$ bounded on $X$ and satisfying: 

$(i)$ $H$ is infsup-convex on $X$ and the family $\textnormal{conv}\left (\lbrace H(x,\cdot): x\in X\rbrace \right )$ is relatively compact in $\mathcal{C}(Y)=\mathcal{C}_b(Y)$, for the pointwize topology on $Y$ (in particular if $\lbrace H(x,\cdot): x\in X\rbrace$  is equi-continuous  on $Y$),

$(ii)$ $L$ is supinf-concave on $Y$,

$(iii)$ $H\leq L$.

Then,

$$\inf_{x\in X} \sup_{y\in Y} H(x,y) \leq \sup_{y\in Y}\inf_{x\in X} L(x,y).$$

\end{theorem}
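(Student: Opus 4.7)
The plan is to apply Lemma~\ref{lem2}$(ii)$ on the $Y$-side to the function $L$, producing a measure $\nu \in \overline{\textnormal{conv}}^{w^*}(\delta(X))$ that certifies the value $s := \sup_{y} \inf_{x} L(x,y)$; to transport $\nu$ back to the $Y$-side by integrating the bounded functions $H(\cdot, y)$ against $\nu$; and to conclude by combining Lemma~\ref{lem1} with the infsup-convexity of $H$.

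First, I would apply Lemma~\ref{lem2}$(ii)$ with $f = L$, which is supinf-concave on $Y$ and bounded on $X$, to obtain $\nu \in \overline{\textnormal{conv}}^{w^*}(\delta(X))$ with $\langle \nu, L(\cdot, y) \rangle \leq s$ for every $y \in Y$. Then, fixing a net $(\nu_\alpha)_\alpha \subset \textnormal{conv}(\delta(X))$, $\nu_\alpha = \sum_i \lambda_i^\alpha \delta_{x_i^\alpha}$, that $w^*$-converges to $\nu$ in $\ell^\infty(X)^*$, I would define
$$\Xi_\alpha := \sum_i \lambda_i^\alpha H(x_i^\alpha, \cdot) \in \textnormal{conv}(C_X), \qquad C_X := \{H(x, \cdot) : x \in X\}.$$
Since $H$ and $L$ are bounded on $X \times Y$ and on $X$ respectively, both $H(\cdot, y)$ and $L(\cdot, y)$ belong to $\ell^\infty(X)$ for each fixed $y$; combining the $w^*$-convergence of $\nu_\alpha$ with $H \leq L$ and the positivity of the $\nu_\alpha$ gives
$$\Xi_\alpha(y) = \langle \nu_\alpha, H(\cdot, y) \rangle \;\longrightarrow\; \langle \nu, H(\cdot, y) \rangle \leq \langle \nu, L(\cdot, y) \rangle \leq s$$
for every $y \in Y$.

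Setting $\Xi^*(y) := \langle \nu, H(\cdot, y) \rangle$, this shows that $\Xi_\alpha \to \Xi^*$ in $\tau_p$ and $\sup_{y} \Xi^*(y) \leq s$. By hypothesis $(i)$, $\textnormal{conv}(C_X)$ is $\tau_p$-relatively compact in $\mathcal{C}(Y) = \mathcal{C}_b(Y)$, so every $\tau_p$-cluster point of $(\Xi_\alpha)$ lies in $\mathcal{C}_b(Y)$; together with the uniqueness of the pointwise limit, this forces $\Xi^* \in \overline{\textnormal{conv}}^{\tau_p}(C_X) \subset \mathcal{C}_b(Y)$.

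Finally, Lemma~\ref{lem1} applied with $\Omega = Y$ and $\Phi = C_X$ (its hypotheses being the pseudocompactness of $Y$, the uniform boundedness of $H$, and the $\tau_p$-relative compactness of $\textnormal{conv}(C_X)$), combined with the infsup-convexity of $H$, will yield
$$r := \inf_{x} \sup_{y} H(x,y) = \inf_{\Psi \in \textnormal{conv}(C_X)} \sup_{y} \Psi(y) = \inf_{\Psi \in \overline{\textnormal{conv}}^{\tau_p}(C_X)} \sup_{y} \Psi(y) \leq \sup_{y} \Xi^*(y) \leq s,$$
which is the desired inequality. The hard part will be cleanly identifying $\Xi^*$ as an element of $\overline{\textnormal{conv}}^{\tau_p}(C_X)$ rather than merely a bounded pointwise limit of continuous functions: this is precisely where the topological hypothesis $(i)$ on $H$ and the pseudocompactness of $Y$ genuinely interact, through Lemma~\ref{lem1}.
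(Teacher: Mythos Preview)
Your proof is correct and follows essentially the same route as the paper's: both arguments combine Lemma~\ref{lem1} (to pass from $\textnormal{conv}(C_X)$ to $\overline{\textnormal{conv}}^{\tau_p}(C_X)$ using pseudocompactness and the $\tau_p$-relative compactness hypothesis) with Lemma~\ref{lem2}$(ii)$ (to rewrite $\sup_y\inf_x L$ as $\inf_{\mu\in\overline{\textnormal{conv}}^{w^*}(\delta(X))}\sup_y\langle\mu,L(\cdot,y)\rangle$), together with the positivity of elements of $\overline{\textnormal{conv}}^{w^*}(\delta(X))$ to compare $H$ and $L$. The only stylistic difference is that the paper asserts the set identity $\overline{\textnormal{conv}}^{\tau_p}(\Phi)=\{\mu\circ H:\mu\in\overline{\textnormal{conv}}^{w^*}(\delta(X))\}$ and then takes infima over the whole set, whereas you extract the single optimal $\nu$ from Lemma~\ref{lem2}$(ii)$ and verify directly that $\Xi^*=\nu\circ H$ lands in $\overline{\textnormal{conv}}^{\tau_p}(C_X)$; your version is arguably cleaner, since the paper's set identity is stated without justification and in fact relies on exactly the relative-compactness argument you spell out.
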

\begin{proof} Since, $H$ is bounded on $X\times Y$, then $\Phi:=\lbrace \delta_x\circ H:=H(x,\cdot): x\in X\rbrace$ is uniformly bounded in $\mathcal{C}_b(Y)$. We see by the definitions that $$\overline{\textnormal{conv}}^{\tau_p} (\Phi)=\lbrace  \mu \circ H: \mu \in  \overline{\mathrm{conv}}^{w^*}\left(\delta(X)\right)\rbrace,$$
$$\textnormal{conv} (\Phi)=\lbrace \mu \circ H: \mu \in  \mathrm{conv}\left(\delta(X)\right)\rbrace.$$
Thus, using Lemma \ref {lem1} we get
$$\inf_{\mu \in  \overline{\mathrm{conv}}^{w^*}\left(\delta(X)\right)} \sup_{y\in Y} \langle \mu, H(\cdot,y)\rangle=\inf_{\mu \in  \mathrm{conv}\left(\delta(X)\right)} \sup_{y\in Y} \langle \mu, H(\cdot,y)\rangle.$$
Since, $H$ is infsup-convex on $X$ then, 
$$\inf_{x\in X} \sup_{y\in Y} H(x,y)=\inf_{\mu \in  \mathrm{conv}\left(\delta(X)\right)} \sup_{y\in Y} \langle \mu, H(\cdot,y)\rangle =\inf_{\mu \in  \overline{\mathrm{conv}}^{w^*}\left(\delta(X)\right)} \sup_{y\in Y} \langle \mu, H(\cdot,y)\rangle.$$
Since $H\leq L$  and $\mu$ is positive whenever $\mu\in \overline{\textnormal{conv}}^{w^*}\left(\delta(X)\right)$, we have $\langle \mu, H(\cdot,y)\rangle \leq \langle \mu, L(\cdot,y)\rangle$, for all $y \in Y$ and all $\mu\in \overline{\textnormal{conv}}^{w^*}\left(\delta(X)\right)$. Using this fact and the above formulas, it follows 
\begin{eqnarray*}
 \underset{x\in X}{\inf} \underset{y\in Y}{\sup} H(x,y)&=&\underset{\mu\in \overline{\textnormal{conv}}^{w^*}\left(\delta(X)\right)}{\inf} \underset{y\in Y}{\sup} \langle \mu, H(\cdot,y)\rangle\\
&\leq& \underset{\mu\in \overline{\textnormal{conv}}^{w^*}\left(\delta(X)\right)}{\inf} \underset{y\in Y}{\sup} \langle \mu, L(\cdot,y)\rangle.
\end{eqnarray*}
Using the part $(ii)$ of Lemma ~\ref{lem2}, since $L$ is supinf-concave on $Y$ and bounded on $X$,
\begin{eqnarray*}
\sup_{y\in Y}\inf_{x\in X} L(x,y)=  \underset{\mu\in \overline{\textnormal{conv}}^{w^*}\left(\delta(X)\right)}{\inf} \underset{y\in Y}{\sup} \langle \mu, L(\cdot,y)\rangle.
\end{eqnarray*}
Hence, 
\begin{eqnarray*}
 \underset{x\in X}{\inf} \underset{y\in Y}{\sup} H(x,y) \leq \sup_{y\in Y}\inf_{x\in X} L(x,y).
\end{eqnarray*}
\end{proof}

\section{Applications to $w^*$-pointwise convergent sequence.} \label{RS5}
As an application, we obtain in Proposition \ref{leto} and Corollary \ref{let} extensions of \cite[Proposition, p. 103]{DF} and \cite[Corollary 2]{KO}. In Proposition \ref{leto}, we deal with the real-valued case replacing compactness by pseudocompactness and in Corollary \ref{let}, we deal with the vector-valued case replacing  compactness by countably compactness and the  pointwize convergence will be replaced by the more general notion of $w^*$-pointwize convergence. 

\begin{proposition} \label{leto}  Let $Z$ be a completely regular Hausdorff pseudocompact space and $(f_n)_n$ be a uniformly bounded sequence of continuous functions from $Z$ into $\R$. Suppose that  

$(i)$ the sequence $(f_n)_n$ pointwize converges to $0$ on $Z$, 

$(ii)$ the set $\textnormal{conv}\lbrace  f_n:  n\in \N \rbrace$ is relatively compact in $\mathcal{C}(Z)$ for the pointwize topology on $Z$. 

Then, there exists a sequence of linear convex combinations of $(f_n)$ which is uniformly convergent to $0$ on $Z$ (that is $\|f_n\|_{\infty} \to 0$).

\end{proposition}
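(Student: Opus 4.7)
The plan is to apply Lemma~\ref{lem1} twice: first to the tails of $(f_n)$ in order to secure a one-sided uniform bound, and then to the negation of the resulting sequence in order to upgrade it to two-sided uniform control. Fix $\varepsilon>0$. For each $N\ge 1$ set $\Phi_N:=\{f_n:n\ge N\}$; its convex hull lies in $\textnormal{conv}\{f_n:n\in\N\}$, hence is uniformly bounded and relatively $\tau_p$-compact in $\CC(Z)$ by hypothesis. The pointwise convergence $f_n\to 0$ places the zero function in $\overline{\textnormal{conv}}^{\tau_p}(\Phi_N)$, so
\[
\inf_{g\in \overline{\textnormal{conv}}^{\tau_p}(\Phi_N)}\sup_{z\in Z}g(z)\le 0,
\]
and Lemma~\ref{lem1} promotes this to $\inf_{g\in\textnormal{conv}(\Phi_N)}\sup_{z\in Z}g(z)\le 0$. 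I would therefore pick $g_N\in\textnormal{conv}(\Phi_N)$ with $\sup_{z\in Z}g_N(z)<\varepsilon$.

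The next step is to observe that $(g_N)_N$ still converges pointwise to $0$: given $z\in Z$ and $\delta>0$, choose $N_0$ with $|f_n(z)|<\delta$ for every $n\ge N_0$; then for $N\ge N_0$, $g_N$ is a convex combination of $f_n(z)$'s that are all $<\delta$ in modulus, so $|g_N(z)|<\delta$. The family $\{-g_N:N\ge 1\}$ is therefore uniformly bounded, has convex hull $-\textnormal{conv}\{g_N\}\subset-\textnormal{conv}\{f_n\}$ (still relatively $\tau_p$-compact), and satisfies $-g_N\to 0$ pointwise. A second application of Lemma~\ref{lem1} produces $h\in\textnormal{conv}\{-g_N\}$ with $\sup_{z\in Z}h(z)<\varepsilon$. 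Writing $h=-h'$ where $h'=\sum_i\mu_ig_{N_i}\in\textnormal{conv}\{g_N\}\subset\textnormal{conv}\{f_n\}$, the bound $\sup_z(-h'(z))<\varepsilon$ follows immediately, while
\[
\sup_{z\in Z}h'(z)\le\sum_i\mu_i\sup_{z\in Z}g_{N_i}(z)<\varepsilon
\]
because each $g_{N_i}$ was built with the one-sided bound; hence $\|h'\|_\infty<\varepsilon$.

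Iterating the construction with $\varepsilon=1/k$ for each $k\in\N$ produces the desired sequence of convex combinations of $(f_n)$ converging uniformly to $0$ on $Z$. The main obstacle to a single-shot argument is that Lemma~\ref{lem1} controls only the positive part $\sup_z g(z)$, not the negative part $-\inf_z g(z)$; the key trick is to manufacture an auxiliary sequence $(g_N)$ which retains the one-sided bound \emph{and} inherits pointwise convergence to $0$, thereby becoming eligible for a second application of the lemma to its negation without sacrificing the positive-side control already obtained.
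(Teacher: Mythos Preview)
Your argument is correct, and it takes a genuinely different route from the paper's proof.

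The paper applies Theorem~\ref{cor-equic} (the pseudocompact two-function minimax theorem) in a single shot. To convert the one-sided quantity $\sup_z g(z)$ into $\|g\|_\infty$, the paper introduces an auxiliary variable $t\in[-1,1]$ and works with $F(f,x,t)=tf(x)$ on $C\times(Z\times[-1,1])$, so that $\sup_{(x,t)}F(f,x,t)=\|f\|_\infty$; it then checks that $F$ is supinf-concave on $Z\times[-1,1]$ (this is where the pointwise convergence $f_n\to 0$ is used) and obtains $\inf_{f\in C}\|f\|_\infty=0$ directly from the minimax equality. Your approach bypasses Theorem~\ref{cor-equic} entirely and works only with Lemma~\ref{lem1}, applied twice: the first pass to the tails $\Phi_N$ controls the positive part, and the second pass to the negatives $\{-g_N\}$ controls the negative part, with convexity of the sup preserving the first bound. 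Both arguments solve the same underlying obstacle---Lemma~\ref{lem1} controls only $\sup$, not $\|\cdot\|_\infty$---but by different devices: the paper via the $[-1,1]$ factor and supinf-concavity, you via iteration. Your route is more elementary and self-contained; the paper's route showcases Theorem~\ref{cor-equic} as the workhorse of the section.
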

\begin{proof} Define $C=\textnormal{conv}\lbrace f_n:n\in \N \rbrace$ and the function $F: C\times Z\times [-1,1]\to \R$ by $F(f,x,t)=tf(x)$ and $\Phi:=\lbrace F(f, \cdot, \cdot): f\in C \rbrace \subset \mathcal{C}(Z\times [-1,1])$. We are going to apply Theorem \ref{cor-equic} with the functions $H=L=F$, the convex set $X=C$ and the completely regular Hausdorff pseudocompact space $Y=Z\times [-1,1]$ (see \cite[Lemma 4.3]{Strm}). For this, we need to show that $F$ is infsup-convex on $C$ and supinf-concave on $Z\times [-1,1]$ and that the $\textnormal{conv}(\Phi)$ is uniformly bounded and relatively compact for the pointwize topology in $\mathcal{C}(Z\times [-1,1])$. Indeed, 

$(a)$ $F$ is convex on $C$, hence it is in particular infsup-convex on $C$,

$(b)$ $\textnormal{conv}(\Phi)=\Phi$ is uniformly bounded in $\mathcal{C}(Z\times [-1,1])$ since $(f_n)_n$ is uniformly bounded. On the other hand, using the assumption $(ii)$, it is clear that $\textnormal{conv}(\Phi)$ is relatively compact for the pointwize topology in $\mathcal{C}(Z\times [-1,1])$. 

$(c)$ $F$ is supinf-concave on $Z\times [-1,1]$. Indeed,  let  $n\geq 1, x_{1},...,x_{n} \in Z, t_{1},...,t_{n} \in [-1,1],  (a_{1},...,a_{n})\in \Delta_n$, we have  
\begin{eqnarray*}
\inf_{f\in C}\sum_{i=1}^n a_{i} t_{i}f(x_{i})&\leq& \sum_{i=1}^n a_{i} t_{i}f_m(x_{i}), \forall m\in \N.
\end{eqnarray*}
By taking the limit when $m\to+\infty$ in the above inequality and using $(i)$, we see that: $\forall n\geq 1, \forall x_{1},...,x_{n} \in Z, \forall t_{1},...,t_{n} \in [-1,1], \forall (a_{1},...,a_{n})\in \Delta_n$
\begin{eqnarray*}
\inf_{f\in C}\sum_{i=1}^n a_{i} t_{i}f(x_{i}) &\leq& 0.
\end{eqnarray*}
Hence, 
\begin{eqnarray*} 
 \sup_{\underset{\underset{(\lambda_1,\lambda_2,...,\lambda_n)\in \Delta_n}{(x_1,t_1), ...,(x_n,t_n)\in Z\times [-1,1]}}{n\geq 1}} \inf_{x\in X}\sum_{i=1}^n a_i F(f,x_i, t_i) \leq 0
\end{eqnarray*}
On the other hand, 
$$\sup_{(x,t)\in Z\times [-1,1]} \inf_{f\in C} F(f, x,t) \geq \sup_{x\in Z} \inf_{f\in C} F(f, x,0)  =0.$$
Thus, 
$$\sup_{(x,t)\in Z\times [-1,1]} \inf_{f\in C}F(f, x,t) =  \sup_{\underset{\underset{(\lambda_1,\lambda_2,...,\lambda_n)\in \Delta_n}{(x_1,t_1), ...,(x_n,t_n)\in Z\times [-1,1]}}{n\geq 1}} \inf_{x\in X}\sum_{i=1}^n a_i F(f,x_i, t_i)= 0.$$
This shows that $F$ is supinf-concave on $Z\times [-1,1]$.

Now, from $(a)$, $(b)$ and $(c)$ and Theorem \ref{cor-equic}, we get that $$\inf_{f\in C} \|f\|_{\infty} = \inf_{f\in C} \sup_{(x,t)\in Z\times [-1,1]} F(f,x,t) = \sup_{(x,t)\in Z\times [-1,1]} \inf_{f\in C} F(f,x,p) =0.$$
Hence, there exists a sequence of linear convex combinations of $(f_n)$ which is uniformly convergent to $0$ on $Z$.
\end{proof}

We need the following lemma.
Recall that in a non-completely metrizable locally convex space, the closed convex hull of a compact set is not compact in general (see \cite[Example 3.34, p. 185]{AB}). This explains why we assumed in Lemma \ref{lem1} that $\textnormal{conv}(\Phi)$ is relatively compact in $\mathcal{C}(Z)$ for the pointwize topology on $Z$. However, as it is shown in the following lemma, when $Z$ is a countably compact space  and $\Phi$ is nonempty uniformly bounded and relatively compact subset in $\mathcal{C}(Z)$, then the $\tau_p$-closed convex hull $\overline{\textnormal{conv}}^{\tau_p}(\Phi)$ is also compact in $\mathcal{C}(Z)$ for the pointwize topology.

\begin{lemma} \label{relative} Let $Z$ be a countably compact space and $\Phi$ be a nonempty uniformly bounded and relatively compact subset in $\mathcal{C}(Z)$ for the pointwize topology denoted $\tau_p$. Then, the $\tau_p$-closed convex hull $\overline{\textnormal{conv}}^{\tau_p}(\Phi)$ is compact in $\mathcal{C}(Z)$ for the pointwize topology.
\end{lemma}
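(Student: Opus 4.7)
The plan is to reduce the claim to the classical Krein theorem in the Banach space $\mathcal{C}_b(Z)$, by transferring everything from $\tau_p$ to the weak topology via the same agreement-of-topologies result already used in the proof of Theorem \ref{theorem1}. Set $M:=\sup_{\phi\in\Phi}\|\phi\|_\infty<\infty$ and let $K:=\overline{\Phi}^{\tau_p}\subset\mathcal{C}(Z)$, which is $\tau_p$-compact by assumption and contained in $\mathcal{C}_b(Z)$ with uniform bound $M$. Since every countably compact (Hausdorff) space is pseudocompact, the cited result \cite[Theorem 2.6]{Wrf} applies and tells us that, on the uniformly bounded $\tau_p$-compact set $K$ of continuous functions, the pointwise topology $\tau_p$ coincides with the weak topology $\sigma:=\sigma(\mathcal{C}_b(Z),\mathcal{C}_b(Z)^*)$. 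In particular, $K$ is $\sigma$-compact in the Banach space $\mathcal{C}_b(Z)$.

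Next I invoke the classical Krein theorem: the norm-closed convex hull of a weakly compact subset of a Banach space is weakly compact. Applied to $K$, this produces $L:=\overline{\textnormal{conv}}^{\|\cdot\|_\infty}(K)\subset\mathcal{C}_b(Z)$, which is $\sigma$-compact and still uniformly bounded by $M$ (weak closure preserves norm bounds by lower semicontinuity of $\|\cdot\|_\infty$). Since every Dirac $\delta_z$ lies in $\mathcal{C}_b(Z)^*$, we have $\tau_p\subseteq\sigma$ on $\mathcal{C}_b(Z)$, so the identity $(L,\sigma)\to(L,\tau_p)$ is continuous; as both topologies are Hausdorff, this forces $L$ to be $\tau_p$-compact as well.

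Finally, $L$ is a $\tau_p$-closed convex subset of $\mathcal{C}(Z)$ containing $\Phi$, hence also containing $\overline{\textnormal{conv}}^{\tau_p}(\Phi)$. As a $\tau_p$-closed subset of the $\tau_p$-compact set $L$, the set $\overline{\textnormal{conv}}^{\tau_p}(\Phi)$ is itself $\tau_p$-compact in $\mathcal{C}(Z)$, which is exactly what is claimed. The only delicate point, and the one which really required countable compactness rather than mere pseudocompactness of $\Phi$'s behaviour, is the legitimacy of invoking \cite[Theorem 2.6]{Wrf} on $K$; once the weak topology is available, Krein provides the convex-hull step for free and the transfer back to $\tau_p$ is automatic because $\tau_p$ is weaker than $\sigma$ on $\mathcal{C}_b(Z)$.
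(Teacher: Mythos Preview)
Your argument is correct and takes a genuinely different route from the paper's. The paper first observes that $\overline{\textnormal{conv}}^{\tau_p}(\Phi)$ is automatically $\tau_p$-compact in $\ell^{\infty}(Z)$ by Banach--Alaoglu (identifying $\ell^{\infty}(Z)=(\ell^{1}(Z))^*$), and then spends the real effort on showing that this closure actually stays inside $\mathcal{C}(Z)$: it passes to the compact Hausdorff space $K=(\overline{\textnormal{conv}}^{\tau_p}(\Phi),\tau_p)$, considers the evaluation map $\chi:Z\to\mathcal{C}(K)$, uses a result of Reznichenko on images of countably compact sets to get relative $\tau_p$-compactness of $\chi(Z)$ in $\mathcal{C}(K)$, and finally applies Grothendieck's theorem on $K$ to conclude that each limit $\mu^*\circ\chi$ is continuous on $Z$. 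Your approach bypasses this continuity analysis entirely: Wheeler's theorem turns the $\tau_p$-compact set $\overline{\Phi}^{\tau_p}$ into a weakly compact subset of the Banach space $\mathcal{C}_b(Z)$, Krein's theorem supplies the convex-hull step in one stroke, and the comparison $\tau_p\subset\sigma$ brings you back. This is shorter and uses only standard Banach-space tools already cited in the paper.

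One small correction to your closing remark: Wheeler's result \cite[Theorem 2.6]{Wrf} is stated for completely regular Hausdorff \emph{pseudocompact} spaces, so your argument in fact goes through under pseudocompactness alone (plus the standing complete regularity and Hausdorff assumptions) and does not specifically require countable compactness. It is the paper's argument, via the Reznichenko result on images of countably compact sets, that genuinely exploits countable compactness; your route happens to yield a mild strengthening of the lemma.
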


\begin{proof} Since  $\overline{\textnormal{conv}}^{\tau_p}(\Phi)$ is uniformly bounded and $\tau_p$-closed it is $\tau_p$-compact in $\ell^{\infty}(Z)$. Indeed, since $\ell^{\infty}(Z)=(\ell^1(Z))^*$, then  the topology $\tau_p$ and the weak$^*$ topology coincide on the bounded set $\overline{\textnormal{conv}}^{\tau_p}(\Phi)$ (see \cite[Proposition 2.34]{We}) and so we apply the Banach-Alaoglu-Bourbaki theorem. To conclude, it remains to show  that    $\overline{\textnormal{conv}}^{\tau_p}(\Phi) \subset \mathcal{C}(Z)$. Set $K:=(\overline{\textnormal{conv}}^{\tau_p}(\Phi),\tau_p)$ a compact Hausdorff  space. For each $x\in Z$, the evaluation mapping  $\chi_x : \phi\mapsto \phi(x)$ is bounded and continuous on $K$, hence $\chi_x \in \mathcal{C}(K)$. Moreover, the family $(\chi_x)_{x\in Z}$ is uniformly bounded on $K$ since $K$ is uniformly bounded on $Z$. Let $(\psi_\alpha)_\alpha\subset \textnormal{conv}(\Phi)$ be a net. Since $\overline{\textnormal{conv}}^{w^*}\lbrace \delta_\phi: \phi\in K \rbrace$ is $w^*$-compact in the dual space $(C(K))^*$, there exists a subnet $(\delta_{\psi_{h(\alpha)}})_\alpha$ that $w^*$-converges to some $\mu^*$ in $(\mathcal{C}(K))^*$. In particular, $\delta_{\psi_{h(\alpha)}}(\chi_x):=\psi_{h(\alpha)}(x)$ converges to $\mu^*(\chi_x)$ in $\R$ for every $x\in Z$. That is, $(\psi_{h(\alpha)})_\alpha$ converges pointwize to $\mu^*\circ \chi$. It remains to show that $\mu^*\circ \chi \in \mathcal{C}(Z)$, in other words, that $\mu^*\circ \chi: Z \to \R$ is continuous. Indeed, clearly the map $\chi: Z \to (\mathcal{C}(K), \tau_p)$ defined by $\chi(x):=\chi_x$ is continuous, so using \cite[Corollary 1.9 (a)]{Re}, the set $\chi(Z)$ is relatively compact in $(\mathcal{C}(K), \tau_p)$ as image by the continuous mapping $\chi$ of the countably compact set $Z$. According to Grothendieck's theorem \cite[Theorem 5]{Ga}, the  pointwise topology and the weak topology $\sigma(C(K), (C(K))^*)$ agree on $\chi(Z)$.  It follows that $\mu^* \circ \chi$ is continuous on $Z$. Finally, we proved that every net $(\psi_\alpha)_\alpha\subset \textnormal{conv}(\Phi)$ has a subnet converging pointwize to an element of $\mathcal{C}(Z)$, that is, $\overline{\textnormal{conv}}^{\tau_p}(\Phi)$ is $\tau_p$-compact in $\mathcal{C}(Z)$.
\end{proof}

 If we assume that $Z$ is a countably compact space (a particular case of pseudocompact spaces \cite{Strm}), then the condition $(ii)$ in Proposition \ref{leto} is not a necessary condition because it would follow from $(i)$ thanks to Lemma \ref{relative}. Thus, we obtain the following corollary.

Let $Z$ be a topological space, $E$ be a Banach space and $(f_n)_n$ be a sequence of continuous functions from $Z$ into $E$. We say that the $E$-valued sequence $(f_n)_n$, $w^*$-pointwize converges to $0$ on $Z$ if and only if the real-valued sequence $(p\circ f_n)_n$ pointwize converges to $0$ on $Z$ for every $p\in B_{E^*}$ (the closed unit ball of the dual $E^*$). Clearly, $(f_n)_n$, $w^*$-pointwize converges to $0$ on $Z$ whenever $(f_n)_n$, pointwize converges to $0$ on $Z$ (that is $\|f_n(x)\|_E\to 0$, $\forall x\in Z$).

\begin{corollary} \label{let}  Let $Z$ be a completely regular Hausdorff countably compact space, $E$ be a Banach space and $(f_n)_n$ be a uniformly bounded sequence of continuous functions from $Z$ into $E$. Suppose that  the sequence $(f_n)_n$, $w^*$-pointwize converges to $0$ on $Z$. Then, there exists a sequence of linear convex combinations of $(f_n)$ which is uniformly convergent to $0$ on $Z$ (that is $\|f_n\|_{\infty} \to 0$).
\end{corollary}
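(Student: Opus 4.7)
\textbf{Proof plan for Corollary \ref{let}.} The plan is to reduce the vector-valued statement to the scalar-valued Proposition \ref{leto} by dualizing: pair each $f_n$ against the $w^*$-compact ball of $E^*$ to get a scalar sequence on a larger countably compact space, then transport the resulting uniform convergence back to $E$ via the isometric embedding $e \mapsto \langle \cdot, e\rangle$ of $E$ into $\mathcal{C}(B_{E^*}, w^*)$.

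Concretely, let $K := (B_{E^*}, w^*)$, which is a compact Hausdorff space by the Banach-Alaoglu theorem. Since $Z$ is countably compact and $K$ is compact, the product $Z \times K$ is countably compact (and completely regular Hausdorff), hence in particular pseudocompact. Define $g_n : Z \times K \to \R$ by $g_n(z,p) := p(f_n(z))$. Each $g_n$ is continuous because the duality pairing $E \times (B_{E^*}, w^*) \to \R$ is jointly continuous on bounded sets (the bound $|\langle p,e\rangle - \langle p_0, e_0\rangle| \leq \|e - e_0\| + |\langle p - p_0, e_0\rangle|$ handles both factors), and $f_n$ is continuous. The sequence $(g_n)$ is uniformly bounded by $M := \sup_n \|f_n\|_{\infty} < +\infty$, and the $w^*$-pointwise convergence hypothesis gives $g_n(z,p) \to 0$ for every $(z,p)\in Z\times K$.

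To apply Proposition \ref{leto}, I need to verify that $\mathrm{conv}\{g_n : n\in\N\}$ is relatively compact in $\mathcal{C}(Z\times K)$ for $\tau_p$. First I would show that $\Phi := \{g_n : n\geq 1\}$ itself is relatively $\tau_p$-compact in $\mathcal{C}(Z\times K)$: by passing to a universal subnet of any net in $\Phi$, either the indices stabilise at some $n_0$ (giving the limit $g_{n_0}$) or the indices tend to infinity (giving the limit $0$ by pointwise convergence); in both cases the limit lies in $\Phi \cup \{0\} \subset \mathcal{C}(Z\times K)$. Since $Z\times K$ is countably compact, Lemma \ref{relative} then gives that $\overline{\mathrm{conv}}^{\tau_p}(\Phi)$ is $\tau_p$-compact in $\mathcal{C}(Z\times K)$, so condition $(ii)$ of Proposition \ref{leto} holds.

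Proposition \ref{leto} applied to $(g_n)$ on $Z\times K$ then produces convex combinations $h_k := \sum_{j} \lambda_j^k g_{n_j^k}$ with $\|h_k\|_{\infty} \to 0$ on $Z \times K$. Setting $\tilde f_k := \sum_j \lambda_j^k f_{n_j^k}$, the linearity of $p$ gives $h_k(z,p) = p(\tilde f_k(z))$, so by the Hahn-Banach consequence $\|e\|_E = \sup_{p \in B_{E^*}} |p(e)|$,
\[
\|\tilde f_k\|_{\infty} = \sup_{z \in Z} \|\tilde f_k(z)\|_E = \sup_{z \in Z,\, p\in K} |p(\tilde f_k(z))| = \|h_k\|_{\infty} \xrightarrow[k\to\infty]{} 0.
\]
This yields the required sequence of convex combinations of $(f_n)$ converging uniformly to $0$ on $Z$. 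The main obstacle is the verification of the compactness hypothesis $(ii)$ of Proposition \ref{leto}, since pointwise convergence of $(g_n)$ alone does not generally make $\mathrm{conv}\{g_n\}$ relatively $\tau_p$-compact; this is exactly where the countable compactness (rather than mere pseudocompactness) of $Z$ is used, via the product stability of countable compactness with $K$ compact and the invocation of Lemma \ref{relative}.
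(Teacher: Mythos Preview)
Your proposal is correct and follows essentially the same approach as the paper's proof: dualize by setting $g_n(z,p)=p(f_n(z))$ on the countably compact product $Z\times (B_{E^*},w^*)$, invoke Lemma~\ref{relative} to obtain the relative $\tau_p$-compactness needed for hypothesis~$(ii)$ of Proposition~\ref{leto}, apply that proposition, and translate back via $\|e\|_E=\sup_{p\in B_{E^*}}|p(e)|$. Your write-up is in fact more careful than the paper's on two points it treats implicitly---the joint continuity of $g_n$ and the relative $\tau_p$-compactness of $\{g_n\}$ in $\mathcal{C}(Z\times K)$ that Lemma~\ref{relative} takes as a hypothesis.
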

\begin{proof}  Define $\widetilde{f}_n: Z\times B_{E^*}\to \R$, by $\widetilde{f}_n(x,p)=p\circ f_n(x)$ for all $(x, p)\in Z\times B_{E^*}$ and for all $n\in \N$. Using the assumptions, we see easily that the real-valued sequence $(\widetilde{f}_n)_n$ is uniformly bounded sequence of continuous functions from $Z\times B_{E^*}$ into $\R$ and that  the sequence $(\widetilde{f}_n)_n$, pointwize converges to $0$ on $Z\times B_{E^*}$. Since $Z$ is Hausdorff countably compact space and $B_{E^*}$ is $w^*$-compact (by the Banach-Alaoglu-Bourbaki theorem), it follows that the product space $Z\times B_{E^*}$ is completely regular Hausdorff countably compact space for the product topology (see \cite{Rt}). Using Lemma \ref{relative}, we get that the set $\textnormal{conv}\lbrace  \widetilde{f}_n:  n\in \N \rbrace$ is relatively compact in $\mathcal{C}(Z\times B_{E^*})$ for the pointwize topology on $Z\times B_{E^*}$. Hence, from Proposition \ref{leto}, there exists a sequence of linear convex combinations of $(\widetilde{f}_n)$ which is uniformly convergent to $0$ on $Z\times B_{E^*}$. Equivalently, there exists a sequence of linear convex combinations of $(f_n)$ which is uniformly convergent to $0$ on $Z$.
\end{proof}


\bibliographystyle{amsplain}

\end{document}